\providecommand{\U}[1]{\protect\rule{.1in}{.1in}}
\newtheorem{theorem}{Theorem}[section]
\newtheorem{corollary}[theorem]{Corollary}
\newtheorem{definition}[theorem]{Definition}
\newtheorem{example}[theorem]{Example}
\newtheorem{lemma}[theorem]{Lemma}
\newtheorem{problem}[theorem]{Problem}
\newtheorem{proposition}[theorem]{Proposition}
\newtheorem{remark}[theorem]{Remark}
\newenvironment{proof}[1][Proof]{\noindent\textbf{#1.} }{\ \rule{0.5em}{0.5em}}
\numberwithin{equation}{section}
\begin{document}

\title{The minimum mean square estimator for a sublinear operator}
\author{Ji Shaolin\thanks{Institute for Financial Studies and Institute of
Mathematics, Shandong University, Jinan, Shandong 250100, PR China
(Jsl@sdu.edu.cn, Fax: +86 0531 88564100).}
\and Sun Chuanfeng\thanks{Institute for Financial Studies and Institute of
Mathematics, Shandong University, Jinan, Shandong 250100, PR China.}}
\maketitle

\begin{abstract}
In this paper, we study the minimum mean square estimator for a sublinear
operator. Under some mild assumptions, we prove the existence and uniqueness
of the minimum mean square estimator.  Several characterizations of the
minimum mean square estimator are obtained. We also explore the relationship
between the minimum mean square estimator and the conditional coherent risk
measure and conditional $g$-expectation.

\end{abstract}

\textbf{Keywords:} minimum mean square estimator; conditional nonlinear
expectation; sublinear operator; coherent risk measure; $g$-expectation

\section{Introduction}

In the classical probability theory, the conditional expectation of a random
variable $\xi$ in $L_{\mathcal{F}}^{2}(\Omega)$ is just the minimum mean
square estimator. In more details, let $\mathcal{C}$ be a sub $\sigma$-algebra
of $\mathcal{F}$. Then this minimum mean square estimator is  the projection
of $\xi$ from $L_{\mathcal{F}}^{2}(\Omega)$ to $L_{\mathcal{C}}^{2}(\Omega)$.
Therefore, the minimum mean square estimator can be used as an alternative
definition of conditional expectation.

In recent decades, nonlinear risk measures and nonlinear expectations have
been proposed and developed rapidly. Various definitions of conditional
nonlinear expectations (risk measures) are proposed. For example, Artzner et
al. [2] introduced coherent risk measure theory in which the conditional
expectation (conditional risk measure) is defined as
\[
\bar{\Phi}_{t}[\xi]:=\mathop{\textrm{ess}\sup}_{P\in\mathcal{P}}E_{P}%
[\xi|\mathcal{F}_{t}],
\]
where $\xi\in\mathcal{F}_{T}$, $\mathcal{F}_{t}$ is a sub $\sigma$-algebra of
$\mathcal{F}_{T}$ and $\mathcal{P}$ is a family of probability measures. They
have proved that if $\mathcal{P}$ is `stable', then the conditional
expectation defined above is time consistent. It is also well known that Peng
studied $g$-expectation in \cite{Peng0} and defined the conditional $g$-expectation
$\mathcal{E}_{g}[\xi|\mathcal{F}_{t}]$ as the solution of a backward
stochastic differential equation with the generator $g$ and the terminal value
$\xi$ at time $t$. $g$-expectation has many good properties including time
consistency, i.e., $\forall0\leq s\leq t\leq T$, we have $\mathcal{E}%
_{g}[\mathcal{E}_{g}[\xi|\mathcal{F}_{t}]|\mathcal{F}_{s}]=\mathcal{E}_{g}%
[\xi|\mathcal{F}_{s}]$.

So it is interesting to explore whether conditional nonlinear expectations
still coincide with the minimum mean square estimators. Note that many
interesting nonlinear expectations (risk measures) are sublinear. So the
purpose of this paper is to study the minimum mean square estimator for a
sublinear operator. We will show that for this minimum mean square estimator,
generally speaking, the time consistency property does not hold. In accordance
with this result, both of the above conditional nonlinear expectations
(Artzner et al.'s and Peng's) fail to be the minimum mean square estimators.

The paper is organized as follows. In section 2, we formulate our problem.
Under some mild assumptions, we prove the existence and uniqueness of the
minimum mean square estimator in section 3. In section 4, we obtain several
characterizations of the minimum mean square estimator. At last section, we
first give the basic properties of the minimum mean square estimator. Then we
explore the relationship between the minimum mean square estimator and the
conditional coherent risk measure and conditional $g$-expectation.

\section{{Problem formulation}}

\bigskip

\subsection{{Preliminary}}

For a given measurable space $(\Omega,\mathcal{F})$, we denote all the bounded
$\mathcal{F}$-measurable functions by $\mathbb{F}$.

\begin{definition}
\label{def-sublinear-operator} A sublinear operator $\rho$ is a functional
$\rho:\mathbb{F}\mapsto\mathbb{R}$ satisfying

(i) Monotonicity: $\rho(\xi_{1})\geq\rho(\xi_{2})$ if $\xi_{1}\geq\xi_{2}$;

(ii) Constant preserving: $\rho(c)=c$ for $c\in\mathbb{R}$;

(iii) Sub-additivity: For each $\xi_{1},\xi_{2}\in\mathbb{F}$, $\rho(\xi
_{1}+\xi_{2})\leq\rho(\xi_{1})+\rho(\xi_{2})$;

(iv) Positive homogeneity: $\rho(\lambda\xi)=\lambda\rho(\xi)$ for
$\lambda\geq0$.
\end{definition}

Note that $\mathbb{F}$ is a Banach space endowed with the supremum norm.
Denote the dual space of $\mathbb{F}$ by $\mathbb{F}^{\ast}$. It is well known
that there is a one-to-one correspondence between $\mathbb{F}^{\ast}$ and the
class of additive set functions. Then we denote an element in $\mathbb{F}%
^{\ast}$ by $E_{P}$ where $P$ is an additive set function. Sometimes we also
use $P$ instead of $E_{P}$.

\begin{proposition}
\label{prop-fatou lemma} Suppose that the sublinear operator $\rho$ can be
represented by a family of probability measures $\mathcal{P}$, i.e., $\rho
(\xi)=\underset{P\in\mathcal{P}}{\sup}E_{P}[\xi]$. For a sequence $\{\xi
_{n}\}_{n\in\mathbb{N}}$, if there exists a $M\in\mathbb{R}$ such that
$\xi_{n}\geq M$ for all $n$, then we have
\[
\rho(\mathop{\lim\inf}_{n}\xi_{n})\leq\mathop{\lim\inf}_{n}\rho(\xi_{n}).
\]

\end{proposition}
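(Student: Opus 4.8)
The plan is to lift the classical Fatou lemma, applied to each individual probability measure in the representing family $\mathcal{P}$, up to the sublinear operator $\rho$ by exchanging the roles of the pointwise supremum and the $\liminf$. The whole argument rests on two elementary facts: the ordinary Fatou lemma holds for every $P\in\mathcal{P}$ because each $P$ is a (countably additive) probability measure and the constant lower bound $M$ is $P$-integrable; and the map $\liminf$ is monotone with respect to the pointwise order on sequences.

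First I would fix an arbitrary $P\in\mathcal{P}$ and normalize the lower bound. Since $\xi_{n}\geq M$, the sequence $\xi_{n}-M$ is nonnegative, so the classical Fatou lemma gives
\[
E_{P}\big[\liminf_{n}(\xi_{n}-M)\big]\leq\liminf_{n}E_{P}[\xi_{n}-M].
\]
Because $P$ is a probability measure, $E_{P}[M]=M$, and the constant passes through both sides; subtracting $M$ throughout yields
\[
E_{P}\big[\liminf_{n}\xi_{n}\big]\leq\liminf_{n}E_{P}[\xi_{n}].\qquad(\ast)
\]

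Next I would introduce the uniform upper bound coming from the representation. For every $n$ and every $P\in\mathcal{P}$ we have $E_{P}[\xi_{n}]\leq\sup_{Q\in\mathcal{P}}E_{Q}[\xi_{n}]=\rho(\xi_{n})$. Since $\liminf$ is monotone, passing to the $\liminf$ on both sides preserves the inequality, so $\liminf_{n}E_{P}[\xi_{n}]\leq\liminf_{n}\rho(\xi_{n})$. Combining this with $(\ast)$ shows that, for every $P\in\mathcal{P}$,
\[
E_{P}\big[\liminf_{n}\xi_{n}\big]\leq\liminf_{n}\rho(\xi_{n}).
\]
The right-hand side is independent of $P$, so I would finish by taking the supremum over $P\in\mathcal{P}$ on the left and invoking the representation $\rho=\sup_{P\in\mathcal{P}}E_{P}$ once more, which gives exactly $\rho(\liminf_{n}\xi_{n})\leq\liminf_{n}\rho(\xi_{n})$.

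The only genuinely delicate point is the justification of the single-measure inequality $(\ast)$: it requires each $P\in\mathcal{P}$ to be countably additive, which is why the statement insists on probability measures rather than the merely finitely additive set functions that populate $\mathbb{F}^{\ast}$, and the bound $M$ is what makes the nonnegative reduction $\xi_{n}-M$ legitimate. One should also ensure that $\liminf_{n}\xi_{n}$ is itself an element of $\mathbb{F}$ so that $\rho(\liminf_{n}\xi_{n})$ is defined; measurability is automatic and boundedness below is supplied by $M$, so this is a mild regularity requirement rather than a real obstacle.
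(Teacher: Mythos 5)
Your proof is correct, but it takes a different route from the paper's. The paper mimics the classical proof of Fatou's lemma at the level of $\rho$ itself: it sets $\zeta_{n}=\inf_{k\geq n}\xi_{k}$, notes that $\{\zeta_{n}\}$ increases to $\liminf_{n}\xi_{n}$, and then uses the identity $\rho(\lim_{n}\zeta_{n})=\lim_{n}\rho(\zeta_{n})$ (continuity of $\rho$ from below along increasing sequences, which rests on the per-measure monotone convergence theorem plus the interchange of the two suprema $\sup_{P}\sup_{n}=\sup_{n}\sup_{P}$), followed by monotonicity $\rho(\zeta_{n})\leq\rho(\xi_{n})$. You instead take the classical Fatou lemma as a black box for each fixed $P\in\mathcal{P}$ and then use the one-sided interchange
\[
\sup_{P\in\mathcal{P}}\;\liminf_{n}E_{P}[\xi_{n}]\;\leq\;\liminf_{n}\;\sup_{P\in\mathcal{P}}E_{P}[\xi_{n}],
\]
which is an elementary order-theoretic inequality rather than an equality along a monotone sequence. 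What your version buys is transparency about where countable additivity enters (only through the per-measure Fatou lemma) and it avoids the step the paper dismisses with ``it is easy to see,'' which is actually where the substance of its argument is hidden; what the paper's version buys is the slightly stronger intermediate fact that $\rho$ is continuous from below along increasing sequences bounded from below, which is reusable elsewhere. Both arguments share the same unaddressed regularity caveat, which you correctly flag: one needs $\liminf_{n}\xi_{n}$ to lie in the domain of $\rho$ (or to interpret the left-hand side in an extended sense) for the statement to parse.
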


\begin{proof}
Set $\zeta_{n}=\inf_{k\geq n}\xi_{k}$. Then $\zeta_{n}\leq\xi_{n}$ and
$\{\zeta_{n}\}_{n\in\mathbb{N}}$ is an increasing sequence. It is easy to see
that
\[
\rho(\mathop{\lim\inf}_{n}\xi_{n})=\rho(\lim_{n}\zeta_{n})=\lim_{n}\rho
(\zeta_{n})\leq\mathop{\lim\inf}_{n}\rho(\xi_{n}).
\]
This completes the proof.
\end{proof}

\begin{definition}
\label{def-fatou property} We call a sublinear operator $\rho$ continuous from
above on $\mathcal{F}$ if for each sequence $\{\xi_{n}\}_{n\in\mathbb{N}%
}\subset\mathbb{F}$ satisfying $\xi_{n}\downarrow0$, we have
\[
\rho(\xi_{n})\downarrow0.
\]

\end{definition}

\begin{lemma}
\label{lem-fatou-probility-property} If a sublinear operator $\rho$ is
continuous from above on $\mathcal{F}$, then for any linear operator $E_{P}$
dominated by $\rho$, $P$ is a probability measure.
\end{lemma}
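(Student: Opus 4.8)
The plan is to check directly that the additive set function $P$ associated with the dominated linear functional $E_{P}$ satisfies the three defining properties of a probability measure: nonnegativity, normalization $P(\Omega)=1$, and countable additivity. Finite additivity comes for free, since $P$ is by construction the additive set function corresponding to an element of $\mathbb{F}^{\ast}$, via the one-to-one correspondence noted above.

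First I would show that $E_{P}$ is a positive functional. For $\xi\geq 0$ we have $-\xi\leq 0$, so monotonicity together with $\rho(0)=0$ (from constant preserving) gives $\rho(-\xi)\leq 0$; since $E_{P}$ is dominated by $\rho$, this yields $E_{P}[-\xi]\leq\rho(-\xi)\leq 0$, hence $E_{P}[\xi]\geq 0$ by linearity. Applying this to indicators shows $P(A)=E_{P}[\mathbf{1}_{A}]\geq 0$ for every $A\in\mathcal{F}$. For normalization, domination together with constant preserving gives $E_{P}[1]\leq\rho(1)=1$ and $E_{P}[-1]\leq\rho(-1)=-1$; the latter rearranges to $E_{P}[1]\geq 1$, so $E_{P}[1]=1$, i.e. $P(\Omega)=1$.

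The main point is countable additivity, and this is where the continuity-from-above hypothesis enters. Given finite additivity, it suffices to verify continuity from above at the empty set: for any decreasing sequence $B_{n}\downarrow\emptyset$ in $\mathcal{F}$, the indicators satisfy $\mathbf{1}_{B_{n}}\downarrow 0$ pointwise, with each $\mathbf{1}_{B_{n}}\in\mathbb{F}$. By Definition \ref{def-fatou property} we then have $\rho(\mathbf{1}_{B_{n}})\downarrow 0$, and the sandwich $0\leq P(B_{n})=E_{P}[\mathbf{1}_{B_{n}}]\leq\rho(\mathbf{1}_{B_{n}})$ forces $P(B_{n})\to 0$. Writing the tail sets $B_{n}=\bigcup_{k>n}A_{k}$ of a family of pairwise disjoint sets $\{A_{k}\}$ then upgrades finite additivity to countable additivity in the standard way.

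I expect the only genuinely nontrivial step to be this last one: positivity and normalization are immediate consequences of monotonicity and constant preserving, whereas countable additivity is exactly what the continuity-from-above assumption is designed to supply, through the reduction to continuity at $\emptyset$ evaluated on indicator functions.
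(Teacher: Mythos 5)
Your proof is correct and takes essentially the same route as the paper's: both use domination together with continuity from above of $\rho$ applied to indicators $\mathbf{I}_{B_{n}}$ of sets $B_{n}\downarrow\emptyset$ to conclude $P(B_{n})\rightarrow 0$, which upgrades finite additivity to countable additivity. The paper simply compresses the remaining checks (positivity and the normalization $P(\Omega)=1$) into ``it is easy to see,'' whereas you have written them out explicitly from monotonicity, constant preserving, and domination.
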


\begin{proof}
For any $A_{n}\downarrow\phi$, we have $\rho(I_{A_{n}})\downarrow0$. If a
linear operator $E_{P}$ is dominated by $\rho$, then $P(A_{n})\downarrow0$. It
is easy to see that $P(\Omega)=1$. Thus, $P$ is a probability measure.
\end{proof}

\begin{proposition}
\label{prop-fatou property} A sublinear operator $\rho$ is continuous from
above on $\mathcal{F}$ if and only if there exists a probability measure
$P_{0}$ and a family of probability measures $\mathcal{P}$ such that

i) $\rho(X)=\sup_{P\in\mathcal{P}}E_{P}[X]$ for all $X\in\mathbb{F}$;

ii) any element in $\mathcal{P}$ is absolutely continuous with respect to
$P_{0}$;

iii) the set $\{\frac{dP}{dP_{0}};P\in\mathcal{P}\}$ is $\sigma(L^{1}%
(P_{0}),L^{\infty}(P_{0}))$-compact,\newline where $\sigma(L^{1}%
(P_{0}),L^{\infty}(P_{0}))$ denotes the weak topology defined on $L^{1}%
(P_{0})$.
\end{proposition}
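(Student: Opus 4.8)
The plan is to prove both implications, with essentially all the work in the necessity (``only if'') direction. For sufficiency, assume (i)--(iii) and take $\xi_{n}\downarrow0$ in $\mathbb{F}$ with $0\leq\xi_{n}\leq C$. By (iii) and the Dunford--Pettis theorem the density family $\{\frac{dP}{dP_{0}}:P\in\mathcal{P}\}$ is uniformly integrable in $L^{1}(P_{0})$. Fix $\eta>0$ and put $A_{n}=\{\xi_{n}>\eta\}$; since $\xi_{n}\downarrow0$ pointwise we have $A_{n}\downarrow\phi$, hence $P_{0}(A_{n})\downarrow0$ by countable additivity of $P_{0}$. Splitting $E_{P}[\xi_{n}]\leq C\,P(A_{n})+\eta$ and using that uniform integrability forces $\sup_{P\in\mathcal{P}}P(A_{n})\to0$ as $P_{0}(A_{n})\to0$, I would conclude $\limsup_{n}\rho(\xi_{n})=\limsup_{n}\sup_{P}E_{P}[\xi_{n}]\leq\eta$, and letting $\eta\downarrow0$ gives $\rho(\xi_{n})\downarrow0$, i.e. continuity from above.

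For necessity, assume $\rho$ is continuous from above. First I would obtain the dual representation: since $\rho$ is sublinear on the Banach space $\mathbb{F}$, the Hahn--Banach theorem yields $\rho(X)=\sup_{P\in\mathcal{Q}}E_{P}[X]$, where $\mathcal{Q}=\{P\in\mathbb{F}^{\ast}:E_{P}\leq\rho\}$ is convex and weak-$\ast$ closed, and for each fixed $X$ the supremum is attained (extend the functional $tX\mapsto t\rho(X)$, which is dominated by $\rho$ thanks to subadditivity and positive homogeneity). Monotonicity and constant preservation force every $P\in\mathcal{Q}$ to be a finitely additive probability, and Lemma \ref{lem-fatou-probility-property} upgrades each dominated functional to a countably additive probability measure, so $\mathcal{Q}$ consists of genuine probability measures.

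The crux is then the construction of the dominating measure $P_{0}$ together with compactness. I would extract uniform countable additivity directly from continuity from above: for any $A_{n}\downarrow\phi$ we have $I_{A_{n}}\downarrow0$, whence $\sup_{P\in\mathcal{Q}}P(A_{n})=\rho(I_{A_{n}})\downarrow0$. A family of probability measures that is bounded (total variation one) and uniformly countably additive is relatively weakly compact in the space of countably additive measures, and by the classical control-measure theorem there is a probability measure $P_{0}$ (realizable as a countable convex combination of elements of $\mathcal{Q}$) dominating all of $\mathcal{Q}$, with the density family uniformly integrable in $L^{1}(P_{0})$. This delivers (ii), and via Dunford--Pettis it gives relative weak compactness of the densities for (iii).

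To promote relative compactness to genuine compactness I would take $\mathcal{P}=\mathcal{Q}$ and check that its density set is $\sigma(L^{1}(P_{0}),L^{\infty}(P_{0}))$-closed: if $\frac{dP_{n}}{dP_{0}}\to h$ in $L^{1}(P_{0})$, then for every $X\in\mathbb{F}\subset L^{\infty}(P_{0})$ we have $E_{P_{n}}[X]\to\int Xh\,dP_{0}=:E_{P}[X]\leq\rho(X)$, while $h\geq0$ and $\int h\,dP_{0}=1$, so $P\in\mathcal{Q}$. Hence the density set is norm-closed and convex, therefore weakly closed by Mazur's theorem, and being relatively weakly compact it is weakly compact, establishing (iii). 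The main obstacle is precisely this necessity direction, and within it the passage from the analytic hypothesis (continuity from above of $\rho$) to a single dominating $P_{0}$ carrying weakly compact densities; the decisive ingredient is the Dunford--Pettis / Bartle--Dunford--Schwartz circle of results linking uniform countable additivity, relative weak compactness, and the existence of a control measure, bridged to $\rho$ through the identity $\rho(I_{A_{n}})=\sup_{P\in\mathcal{Q}}P(A_{n})$.
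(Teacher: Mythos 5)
Your proof is correct, and it follows the paper's skeleton (dual representation of $\rho$, upgrade of the dominated functionals to countably additive probabilities via Lemma \ref{lem-fatou-probility-property}, then a dominating measure and weak compactness of the densities), but it executes the key steps with genuinely different tools. Where the paper produces $P_{0}$ by first invoking Theorem \ref{theA.2} ($\sigma(\mathbb{F}^{\ast},\mathbb{F})$-compactness of $\mathcal{P}$) and then Theorem \ref{theA.3}, you bypass weak-$\ast$ compactness entirely: you read uniform countable additivity directly off the identity $\sup_{P\in\mathcal{Q}}P(A_{n})=\rho(I_{A_{n}})\downarrow0$ and invoke the classical Bartle--Dunford--Schwartz control-measure theorem, which simultaneously yields (ii) and uniform integrability of the densities. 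Where the paper obtains (iii) by citing Corollary 4.35 of \cite{FS}, you prove it by hand: Dunford--Pettis gives relative weak compactness, and norm-closedness plus convexity of the density set (hence weak closedness, by Mazur) upgrades this to genuine compactness. Finally, for sufficiency the paper's one-line appeal to Dini's theorem (the maps $P\mapsto E_{P}[\xi_{n}]$ are weakly continuous on a weakly compact density set and decrease pointwise to zero, hence uniformly) is replaced by your direct estimate $E_{P}[\xi_{n}]\leq C\,P(A_{n})+\eta$ combined with the uniform absolute continuity that uniform integrability provides; note that your argument never uses compactness itself, so it in fact establishes continuity from above under the weaker hypothesis that the density set is merely relatively weakly compact. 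The trade-off is the usual one: the paper's proof is shorter because it leans on packaged results, while yours is self-contained and makes visible exactly where continuity from above and countable additivity enter.
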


\begin{proof}
$\Rightarrow$ By Theorem \ref{theA.1} in Appendix A, $\rho$ can be represented
by the family of linear operators dominated by $\rho$. We denote by
$\mathcal{P}$ all the linear operators dominated by $\rho$. Since $\rho$ is
continuous from above on $\mathcal{F}$, by Lemma
\ref{lem-fatou-probility-property}, every element in $\mathcal{P}$ is a
probability measure. By Theorem \ref{theA.2}, $\mathcal{P}$ is $\sigma
(\mathbb{F}^{\ast},\mathbb{F})$-compact, where $\sigma(\mathbb{F}^{\ast
},\mathbb{F})$ denotes the $weak^{\ast}$ topology defined on $\mathbb{F}%
^{\ast}$. By Theorem \ref{theA.3}, there exists a $P_{0}\in\mathbb{F}%
_{c}^{\ast}$ such that all the elements in $\mathcal{P}$ are absolutely
continuous with respect to $P_{0}$, where $\mathbb{F}_{c}^{\ast}$ denotes all
the countably additive measures in $\mathbb{F}^{\ast}$. Since $\rho$ is
continuous from above on $\mathcal{F}$ and the dual space of $L^{1}(P_{0})$ is
$L^{\infty}(P_{0})$, by Corollary 4.35 in \cite{FS}, the set $\{\frac
{dP}{dP_{0}};P\in\mathcal{P}\}$ is $\sigma(L^{1}(P_{0}),L^{\infty}(P_{0}%
))$-compact, where $L^{1}(P_{0})$ is the space of integrable random variables
and $L^{\infty}(P_{0})$ is the space of all equivalence classes of bounded
real valued random variables.

$\Leftarrow$ We directly deduce this result by Dini's theorem.
\end{proof}

In the following, we will denote by $\mathcal{P}$\ all the linear operators
dominated by $\rho$.

\begin{definition}
\label{def-proper} We call a sublinear operator $\rho$ proper if all the
elements in $\mathcal{P}$ are equivalent to $P_{0}$, where $P_{0}$ is the
probability measure in Proposition \ref{prop-fatou property}.
\end{definition}

\subsection{Minimum mean square estimator}

Let $\mathcal{C}$ be a sub $\sigma$-algebra of $\mathcal{F}$ and $\mathbb{C}$
be the set of all the bounded $\mathcal{C}$-measurable functions. For a given
$\xi\in\mathbb{F}$, our problem is to solve its minimum mean square estimator
for the sublinear operator $\rho$ when we only know "the information"
$\mathcal{C}$. In more details, we want to solve the following optimization problem.

\begin{problem}
\label{Problem} Find a $\hat{\eta}\in\mathbb{C}$ such that
\begin{equation}
\rho(\xi-\hat{\eta})^{2}=\inf_{\eta\in\mathbb{C}}\rho(\xi-\eta)^{2}.
\label{problem}%
\end{equation}

\end{problem}

The optimal solution $\hat{\eta}$ of (\ref{problem}) is called the minimum
mean square estimator. It is also regarded as a minimax estimator in
statistical decision theory.

If $\rho$ degenerates to a linear operator, then $\mathcal{P}$\ contains only
one probability measure and $\rho$ is the mathematical expectation under this
probability measure. In this case, it is well known that the minimum mean
square estimator $\hat{\eta}$ is just the conditional expectation $E[\xi
\mid\mathcal{C]}$.

\section{{Existence and uniqueness results}\newline}

In this section, we study the existence and uniqueness of the minimum mean
square estimator.

For a given $\xi\in\mathbb{F}$, we always suppose that $\sup|\xi|\leq M$ where
$M$ is a positive constant.

\subsection{Existence}

\begin{lemma}
\label{lem-reformulate prob} Suppose that $\xi\in\mathbb{F}$. Then we have
\[
\inf_{\eta\in\mathbb{C}}\rho(\xi-\eta)^{2}=\inf_{\eta\in\mathbb{G}}\rho
(\xi-\eta)^{2},
\]
where $\mathbb{G}$ is all the $\mathcal{C}$-measurable functions bounded by
$M$.
\end{lemma}

\begin{proof}
For any $\eta\in\mathbb{C}$, let $\bar{\eta}:=\eta\mathbf{I}_{\{-M\leq\eta\leq
M\}}+M\mathbf{I}_{\{\eta>M\}}-M\mathbf{I}_{\{\eta<-M\}}$. Then $\bar{\eta}%
\in\mathbb{G}$. For any $P\in\mathcal{P}$, we have
\[%
\begin{array}
[c]{r@{}l}
& E_{P}[(\xi-\eta)^{2}]-E_{P}[(\xi-\bar{\eta})^{2}]=E_{P}[(\bar{\eta}%
-\eta)(2\xi-\eta-\bar{\eta})]\\
\geq & E_{P}[(M-\eta)(2\xi-2M)\mathbf{I}_{\{\eta>M\}}]+E_{P}[(-M-\eta
)(2\xi+2M)\mathbf{I}_{\{\eta<-M\}}].
\end{array}
\]
Since $-M\leq\xi\leq M$, we have
\[
E_{P}[(\xi-\eta)^{2}]\geq E_{P}[(\xi-\bar{\eta})^{2}].
\]
It yields that
\[
\rho(\xi-\eta)^{2}\geq\rho(\xi-\bar{\eta})^{2}%
\]
and
\[
\inf_{\eta\in\mathbb{C}}\rho(\xi-\eta)^{2}\geq\inf_{\eta\in\mathbb{G}}\rho
(\xi-\eta)^{2}.
\]

On the other hand, since $\mathbb{G}\subset\mathbb{C}$, the inverse inequality
is obviously true.
\end{proof}

\begin{theorem}
\label{exist} If the sublinear operator $\rho$ is continuous from above on
$\mathcal{F}$, then there exists an optimal solution $\hat{\eta}\in\mathbb{G}$
for problem \ref{Problem}.
\end{theorem}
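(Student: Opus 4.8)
The plan is to apply the direct method of the calculus of variations: construct a minimizing sequence, extract a candidate minimizer by weak compactness in a suitable Hilbert space, and verify optimality through a lower-semicontinuity argument built from convexity and the Fatou property of $\rho$. By Lemma~\ref{lem-reformulate prob} it suffices to minimize over $\mathbb{G}$, so I set $a:=\inf_{\eta\in\mathbb{G}}\rho\big((\xi-\eta)^2\big)$, which is finite because $0\le(\xi-\eta)^2\le 4M^2$ together with monotonicity and constant preservation of $\rho$. Since $\rho$ is continuous from above, Proposition~\ref{prop-fatou property} supplies a reference probability $P_0$ and a representing family $\mathcal{P}$ with $P\ll P_0$ for every $P\in\mathcal{P}$. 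Consequently $\rho(X)$ depends only on the $P_0$-equivalence class of $X$ (if $X=Y$ $P_0$-a.s. then $E_P[X]=E_P[Y]$ for all $P\in\mathcal{P}$), so I may work modulo $P_0$-null sets and regard $\mathbb{G}$ as a bounded convex subset of the Hilbert space $L^2(P_0)$.

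Next I would choose a minimizing sequence $\{\eta_n\}\subset\mathbb{G}$ with $\rho\big((\xi-\eta_n)^2\big)\to a$. As $\|\eta_n\|_{L^2(P_0)}\le M$, reflexivity of $L^2(P_0)$ yields a subsequence $\eta_{n_k}\rightharpoonup\hat\eta$ weakly. The obstruction is that weak convergence does not pass through the nonlinear functional $\eta\mapsto\rho\big((\xi-\eta)^2\big)$, so I would invoke Mazur's lemma to trade weak convergence for strong convergence of convex combinations: for each $m$ there is a finite convex combination $\zeta_m=\sum_{k}\lambda^m_k\eta_{n_k}$, drawn from the tail $k\ge m$, with $\zeta_m\to\hat\eta$ in $L^2(P_0)$; passing to a further subsequence I may assume $\zeta_m\to\hat\eta$ $P_0$-a.s. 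Each $\zeta_m$ is $\mathcal{C}$-measurable with $|\zeta_m|\le M$, so a suitable version of $\hat\eta$ lies in $\mathbb{G}$.

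Optimality of $\hat\eta$ would then follow in two steps. First, convexity of $t\mapsto t^2$ gives the pointwise estimate $(\xi-\zeta_m)^2=\big(\sum_k\lambda^m_k(\xi-\eta_{n_k})\big)^2\le\sum_k\lambda^m_k(\xi-\eta_{n_k})^2$, whence monotonicity (i), sub-additivity (iii) and positive homogeneity (iv) yield $\rho\big((\xi-\zeta_m)^2\big)\le\sum_k\lambda^m_k\rho\big((\xi-\eta_{n_k})^2\big)\le\sup_{k\ge m}\rho\big((\xi-\eta_{n_k})^2\big)$, and the right-hand side tends to $a$; hence $\limsup_m\rho\big((\xi-\zeta_m)^2\big)\le a$. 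Second, since $(\xi-\zeta_m)^2\to(\xi-\hat\eta)^2$ $P_0$-a.s. and these functions are nonnegative, the Fatou-type Proposition~\ref{prop-fatou lemma} (with its pointwise $\liminf$ replaced by the a.s. limit, which is legitimate because $\rho$ sees only the $P_0$-class) gives $\rho\big((\xi-\hat\eta)^2\big)\le\liminf_m\rho\big((\xi-\zeta_m)^2\big)\le a$. As $\hat\eta\in\mathbb{G}$ forces $\rho\big((\xi-\hat\eta)^2\big)\ge a$, equality holds and $\hat\eta$ solves Problem~\ref{Problem}.

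I expect the main obstacle to be precisely this last lower-semicontinuity step, namely passing to the limit through the nonlinear sublinear functional when only weak compactness of $\mathbb{G}$ is available. The pairing of Mazur's lemma (to exploit joint convexity of the square and of $\rho$) with the Fatou property of $\rho$ is what bridges this gap; the remaining points, that $\hat\eta$ is genuinely $\mathcal{C}$-measurable and bounded by $M$ and that $\rho$ is insensitive to $P_0$-null modifications, are routine and make the argument close.
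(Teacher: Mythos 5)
Your proof is correct, and its skeleton coincides with the paper's: reduce to $\mathbb{G}$ via Lemma~\ref{lem-reformulate prob}, take a minimizing sequence, produce convex combinations of it converging $P_0$-a.s.\ to a candidate $\hat{\eta}\in\mathbb{G}$, and conclude with the convexity/sub-additivity estimate followed by the Fatou-type Proposition~\ref{prop-fatou lemma}. The one genuine difference is the compactness tool used to manufacture those convex combinations: the paper applies the Koml\'os theorem to $\{\eta_n\}$, obtaining a subsequence whose Ces\`aro means converge $P_0$-a.s., whereas you use weak sequential compactness of the bounded set $\mathbb{G}$ in the Hilbert space $L^{2}(P_0)$ together with Mazur's lemma (with combinations drawn from tails) and a further subsequence to upgrade to a.s.\ convergence. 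Your route is more elementary, relying only on standard Hilbert-space facts, and it works precisely because of the uniform bound $|\eta_n|\le M$; Koml\'os is the heavier hammer, whose real strength (it requires only $L^{1}$-boundedness) is not needed here but is needed later in the paper, in Lemma~\ref{lem-attainable-max}, where the same argument is run on the densities $f_{P_n}$, which are not uniformly bounded. A further small merit of your write-up is that you make explicit a point the paper leaves implicit: the Koml\'os/Mazur limit exists only $P_0$-a.s., so before invoking Proposition~\ref{prop-fatou lemma} (stated for the pointwise $\liminf$) one must note that $\rho$, being represented by probability measures absolutely continuous with respect to $P_0$ (Proposition~\ref{prop-fatou property}), is insensitive to modifications on $P_0$-null sets.
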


\begin{proof}
By Lemma \ref{lem-reformulate prob}, there exists a sequence $\{\eta_{n}%
;n\in\mathbb{N}\}\subset\mathbb{G}$ such that
\[
\rho(\xi-\eta_{n})^{2}<\alpha+\frac{1}{2^{n}},
\]
where $\alpha:=\inf_{\eta\in\mathbb{C}}\rho(\xi-\eta)^{2}$. By Koml\'{o}s theorem, there exists a subsequence $\{\eta_{n_{i}}\}_{i\geq1}$ and a
random variable $\hat{\eta}$ such that
\[
\lim_{k\rightarrow\infty}\frac{1}{k}\sum_{i=1}^{k}\eta_{n_{i}}=\hat{\eta
}.\quad P_{0}-a.s.
\]
Since $\{\eta_{n}\}_{n\geq1}$ is bounded by $M$, then $\hat{\eta}\in
\mathbb{G}$. By Proposition \ref{prop-fatou lemma}, we have
\[
\rho(\xi-\hat{\eta})^{2}\leq\liminf_{k\rightarrow\infty}\frac{1}{k}\sum
_{i=1}^{k}\rho(\xi-\eta_{n_{i}})^{2}\leq\lim_{k\rightarrow\infty}(\alpha
+\frac{1}{k})=\alpha.
\]
This completes the proof.
\end{proof}

\begin{remark}
\label{rem3.1} If we only assume that there exists a probability measure
$P_{0}$ such that the $\rho$ is a sublinear operator generated by a family of
probability measures which are all absolutely continuous with respect to
$P_{0}$, then Theorem \ref{exist} still holds.
\end{remark}

\subsection{Uniqueness}

In this subsection, we prove that there exists a unique optimal solution of
problem \ref{Problem}.

\begin{lemma}
\label{lem-attainable-max} If the sublinear operator $\rho$ is continuous from
above on $\mathcal{F}$, then for a given $\xi\in\mathbb{F}$, we have
\[
\sup_{P\in\mathcal{P}}\inf_{\eta\in\mathbb{G}}E_{P}[(\xi-\eta)^{2}]=\max
_{P\in\mathcal{P}}\inf_{\eta\in\mathbb{G}}E_{P}[(\xi-\eta)^{2}].
\]

\end{lemma}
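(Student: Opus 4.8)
The plan is to realize the outer supremum as the maximum of an upper semicontinuous function over a compact set, so that attainment becomes automatic. Identify each $P\in\mathcal{P}$ with its density $dP/dP_{0}\in L^{1}(P_{0})$; by Proposition \ref{prop-fatou property} (iii), the resulting set of densities is $\sigma(L^{1}(P_{0}),L^{\infty}(P_{0}))$-compact, and by (ii) every $P\in\mathcal{P}$ is indeed absolutely continuous with respect to $P_{0}$, so this identification is legitimate. Define
\[
f(P):=\inf_{\eta\in\mathbb{G}}E_{P}[(\xi-\eta)^{2}],\qquad P\in\mathcal{P}.
\]
It then suffices to show that $f$ is upper semicontinuous for the weak topology, since an upper semicontinuous function on a compact set attains its supremum.

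The key observation is that for each fixed $\eta\in\mathbb{G}$ the integrand $(\xi-\eta)^{2}$ is bounded: since $|\xi|\leq M$ and $|\eta|\leq M$, we have $(\xi-\eta)^{2}\leq 4M^{2}$, so $(\xi-\eta)^{2}\in L^{\infty}(P_{0})$. Writing $E_{P}[(\xi-\eta)^{2}]=E_{P_{0}}\!\left[\tfrac{dP}{dP_{0}}(\xi-\eta)^{2}\right]$, the map $\tfrac{dP}{dP_{0}}\mapsto E_{P}[(\xi-\eta)^{2}]$ is exactly the pairing of a variable element of $L^{1}(P_{0})$ against the fixed element $(\xi-\eta)^{2}$ of $L^{\infty}(P_{0})$; hence $P\mapsto E_{P}[(\xi-\eta)^{2}]$ is \emph{continuous} for $\sigma(L^{1}(P_{0}),L^{\infty}(P_{0}))$. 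Thus $f$ is an infimum of a family of weakly continuous functions, and an infimum of a family of upper semicontinuous functions is upper semicontinuous (its strict sublevel sets are unions of open sets).

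With $f$ weakly upper semicontinuous on the weakly compact set of densities, $f$ attains its maximum; that is, the value $\sup_{P\in\mathcal{P}}\inf_{\eta\in\mathbb{G}}E_{P}[(\xi-\eta)^{2}]$ is achieved by some $P\in\mathcal{P}$, which is precisely the asserted equality. I expect the main point requiring care to be the verification that the representing functional $P\mapsto E_{P}[(\xi-\eta)^{2}]$ is genuinely $\sigma(L^{1},L^{\infty})$-\emph{continuous} rather than merely semicontinuous: this is where the uniform boundedness of $(\xi-\eta)^{2}$, hence its membership in $L^{\infty}(P_{0})$, is essential. It is also the one place where the continuity-from-above hypothesis is really used, since it is that hypothesis—through Proposition \ref{prop-fatou property}—that delivers both the absolute continuity with respect to $P_{0}$ and the weak compactness of the density set on which the argument rests.
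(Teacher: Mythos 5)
Your proof is correct, but it takes a genuinely different route from the paper's. The paper runs a direct-method argument: it picks a maximizing sequence of densities $f_{P_n}$, applies Koml\'{o}s' theorem to extract Ces\`{a}ro averages $g_k$ converging $P_0$-a.s.\ to some $f_{\hat P}$, invokes the Dunford--Pettis theorem (uniform integrability, from the weak compactness of the density set) to upgrade this to $L^1$-convergence and conclude $\hat P\in\mathcal{P}$, and then passes to the limit inside $\inf_{\eta\in\mathbb{G}}E_{P_0}[g_k(\xi-\eta)^2]$ using the uniform bound $\|(\xi-\eta)^2\|_{L^\infty}\le 4M^2$; this also quietly uses convexity of $\mathcal{P}$ so that the averages $g_k$ remain admissible densities. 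You instead prove attainment by the abstract Weierstrass theorem: for each fixed $\eta\in\mathbb{G}$ the map $P\mapsto E_{P_0}\bigl[\tfrac{dP}{dP_0}(\xi-\eta)^2\bigr]$ is $\sigma(L^1,L^\infty)$-continuous precisely because $(\xi-\eta)^2\in L^\infty(P_0)$, the pointwise infimum over $\eta$ of these continuous maps is upper semicontinuous (strict sublevel sets are unions of open sets), and an upper semicontinuous function on the weakly compact density set attains its supremum. Your argument is shorter and avoids Koml\'{o}s, the uniform-integrability step, the limit interchange, and any appeal to convexity of $\mathcal{P}$, relying only on the definition of the weak topology and two facts of general topology; what the paper's approach buys in exchange is a more constructive identification of $\hat P$ (as an a.s.\ Ces\`{a}ro limit of a maximizing sequence) and economy of tools across the paper, since the same Koml\'{o}s machinery already drives the existence proof in Theorem \ref{exist}. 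Both proofs use the continuity-from-above hypothesis in exactly the same place, namely through Proposition \ref{prop-fatou property} to secure absolute continuity with respect to $P_0$ and weak compactness of the densities, as you correctly point out.
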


\begin{proof}
By Proposition \ref{prop-fatou property}, there exists a probability measure
$P_{0}$ such that $P\ll P_{0}$ for all $P\in\mathcal{P}$. Let $f_{P}%
:=\frac{dP}{dP_{0}}$ and
\[
\beta:=\sup_{P\in\mathcal{P}}\inf_{\eta\in\mathbb{G}}E_{P}[(\xi-\eta
)^{2}]=\sup_{P\in\mathcal{P}}\inf_{\eta\in\mathbb{G}}E_{P_{0}}[f_{P}(\xi
-\eta)^{2}].
\]
Then take a sequence $\{f_{P_{n}};P_{n}\in\mathcal{P}\}_{n\geq1}$ such that
\[
\inf_{\eta\in\mathbb{G}}E_{P_{0}}[f_{P_{n}}(\xi-\eta)^{2}]\geq\beta-\frac
{1}{2^{n}}.
\]
By Koml\'{o}s theorem, there exists a subsequence
$\{f_{P_{n_{i}}}\}_{i\geq1}$ of $\{f_{P_{n}}\}_{n\geq1}$ and a random variable
$f_{\hat{P}}\in L^{1}(\Omega,\mathcal{F},P_{0})$ such that
\[
\lim_{k\rightarrow\infty}\frac{1}{k}\sum_{i=1}^{k}f_{P_{n_{i}}}=f_{\hat{P}%
}\quad P_{0}-a.s..
\]
Set $g_{k}:=\frac{1}{k}\sum_{i=1}^{k}f_{P_{n_{i}}}$. Then $g_{k}\in
\{f_{P};P\in\mathcal{P}\}$. By Proposition \ref{prop-fatou property}, we know
$\{f_{P};P\in\mathcal{P}\}$ is $\sigma(L^{1}(P_{0}),L^{\infty}(P_{0}%
))$-compact. By Dunford-Pettis theorem, it is uniformly integrable. Thus
$\{g_{k}\}_{k\geq1}$ is also uniformly integrable and $||g_{k}-f_{\hat{P}%
}||_{L^{1}(P_{0})}\rightarrow0$. This shows $\hat{P}\in\mathcal{P}$.

On the other hand, for any $\eta\in\mathbb{G}$ and $k\in\mathbb{N}$, we have
\[
E_{P_{0}}[g_{k}(\xi-\eta)^{2}]\geq\inf_{\tilde{\eta}\in\mathbb{G}}E_{P_{0}%
}[g_{k}(\xi-\tilde{\eta})^{2}].
\]
Then for any $\eta\in\mathbb{G}$, we have
\[
\lim_{k\rightarrow\infty}E_{P_{0}}[g_{k}(\xi-\eta)^{2}]\geq
\mathop{\lim\sup}_{k\rightarrow\infty}\inf_{\tilde{\eta}\in\mathbb{G}}%
E_{P_{0}}[g_{k}(\xi-\tilde{\eta})^{2}].
\]
Thus
\[
\inf_{\eta\in\mathbb{G}}\lim_{k\rightarrow\infty}E_{P_{0}}[g_{k}(\xi-\eta
)^{2}]\geq\mathop{\lim\sup}_{k\rightarrow\infty}\inf_{\tilde{\eta}%
\in\mathbb{G}}E_{P_{0}}[g_{k}(\xi-\tilde{\eta})^{2}].
\]
Since $\{g_{k}\}_{k\geq1}$ is uniformly integral and $||(\xi-\eta
)^{2}||_{L^{\infty}}\leq4M^{2}$, we have
\[
\inf_{\eta\in\mathbb{G}}E_{P_{0}}[f_{\hat{P}}(\xi-\eta)^{2}]=\inf_{\eta
\in\mathbb{G}}E_{P_{0}}[\lim_{k\rightarrow\infty}g_{k}(\xi-\eta)^{2}%
]=\inf_{\eta\in\mathbb{G}}\lim_{k\rightarrow\infty}E_{P_{0}}[g_{k}(\xi
-\eta)^{2}].
\]
Then
\[
\inf_{\eta\in\mathbb{G}}E_{P_{0}}[f_{\hat{P}}(\xi-\eta)^{2}]\geq
\mathop{\lim\sup}_{k\rightarrow\infty}\frac{1}{k}\sum_{i=1}^{k}\inf_{\eta
\in\mathbb{G}}E_{P_{0}}[f_{n_{i}}(\xi-\eta)^{2}]\geq\beta.
\]
Since $\hat{P}\in\mathcal{P}$, we have
\[
\inf_{\eta\in\mathbb{G}}E_{\hat{P}}[(\xi-\eta)^{2}]=\sup_{P\in\mathcal{P}}%
\inf_{\eta\in\mathbb{G}}E_{P}[(\xi-\eta)^{2}].
\]
This completes the proof.
\end{proof}

\begin{corollary}
\label{cor-attainable-max-1} If the sublinear operator $\rho$ is continuous
from above on $\mathcal{F}$, then for a given $\xi\in\mathbb{F}$, we have
\[
\sup_{P\in\mathcal{P}}\inf_{\eta\in\mathbb{C}}E_{P}[(\xi-\eta)^{2}]=\max
_{P\in\mathcal{P}}\inf_{\eta\in\mathbb{C}}E_{P}[(\xi-\eta)^{2}].
\]

\end{corollary}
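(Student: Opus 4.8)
The plan is to piggyback on Lemma \ref{lem-attainable-max}, which already establishes the analogous statement with the truncated class $\mathbb{G}$ in place of $\mathbb{C}$, and then to transfer the conclusion from $\mathbb{G}$ to $\mathbb{C}$ by a per-measure version of the reduction in Lemma \ref{lem-reformulate prob}. The first step I would carry out is to observe that the truncation argument in the proof of Lemma \ref{lem-reformulate prob} is in fact pointwise in $P$: for any $\eta\in\mathbb{C}$, setting $\bar\eta:=\eta\mathbf{I}_{\{-M\leq\eta\leq M\}}+M\mathbf{I}_{\{\eta>M\}}-M\mathbf{I}_{\{\eta<-M\}}\in\mathbb{G}$, the very inequality displayed there gives $E_{P}[(\xi-\eta)^{2}]\geq E_{P}[(\xi-\bar\eta)^{2}]$ for each single $P\in\mathcal{P}$, using only $-M\leq\xi\leq M$. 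Combined with $\mathbb{G}\subset\mathbb{C}$, this yields, for every fixed $P\in\mathcal{P}$,
\[
\inf_{\eta\in\mathbb{C}}E_{P}[(\xi-\eta)^{2}]=\inf_{\eta\in\mathbb{G}}E_{P}[(\xi-\eta)^{2}].
\]

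With this identity in hand, the second step is immediate: taking the supremum over $P\in\mathcal{P}$ of both sides gives
\[
\sup_{P\in\mathcal{P}}\inf_{\eta\in\mathbb{C}}E_{P}[(\xi-\eta)^{2}]=\sup_{P\in\mathcal{P}}\inf_{\eta\in\mathbb{G}}E_{P}[(\xi-\eta)^{2}].
\]
By Lemma \ref{lem-attainable-max} the right-hand side is attained, so there exists $\hat{P}\in\mathcal{P}$ with $\inf_{\eta\in\mathbb{G}}E_{\hat{P}}[(\xi-\eta)^{2}]=\sup_{P\in\mathcal{P}}\inf_{\eta\in\mathbb{G}}E_{P}[(\xi-\eta)^{2}]$. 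Applying the per-measure identity at this particular $\hat{P}$ converts the $\mathbb{G}$-infimum into a $\mathbb{C}$-infimum, giving
\[
\inf_{\eta\in\mathbb{C}}E_{\hat{P}}[(\xi-\eta)^{2}]=\sup_{P\in\mathcal{P}}\inf_{\eta\in\mathbb{C}}E_{P}[(\xi-\eta)^{2}],
\]
which is exactly the statement that the supremum over $\mathcal{P}$ in the corollary is achieved, i.e.\ it is a maximum.

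I do not expect a serious obstacle here; the work was already done in Lemma \ref{lem-attainable-max}. The one point requiring care is the justification that the truncation inequality of Lemma \ref{lem-reformulate prob} holds \emph{separately} for each $P$ rather than only after applying $\rho$; this is what lets the maximizer $\hat{P}$ produced for the $\mathbb{G}$-problem serve simultaneously as the maximizer for the $\mathbb{C}$-problem. Since $\rho$ is continuous from above, every $P\in\mathcal{P}$ is a genuine probability measure by Lemma \ref{lem-fatou-probility-property}, so all the integrals $E_{P}[(\xi-\eta)^{2}]$ are finite and the pointwise comparison is legitimate. The remaining manipulations are routine and mirror the corresponding lines of Lemma \ref{lem-attainable-max}.
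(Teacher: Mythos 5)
Your proposal is correct and follows essentially the same route as the paper: both transfer the attainment established in Lemma \ref{lem-attainable-max} from $\mathbb{G}$ to $\mathbb{C}$ by observing that the truncation inequality in Lemma \ref{lem-reformulate prob} holds for each individual $P\in\mathcal{P}$ (so $\inf_{\eta\in\mathbb{G}}E_{\hat{P}}[(\xi-\eta)^{2}]=\inf_{\eta\in\mathbb{C}}E_{\hat{P}}[(\xi-\eta)^{2}]$), and then conclude that the same $\hat{P}$ realizes the supremum for the $\mathbb{C}$-problem. Your formulation is marginally tidier in stating the per-measure identity once and applying it twice, but the ingredients and logic coincide with the paper's proof.
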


\begin{proof}
Choose $\hat{P}$ as in Lemma \ref{lem-attainable-max}. By Lemma
\ref{lem-reformulate prob} and Lemma \ref{lem-attainable-max}, we have
\[
\sup_{P\in\mathcal{P}}\inf_{\eta\in\mathbb{C}}E_{P}[(\xi-\eta)^{2}]\leq
\sup_{P\in\mathcal{P}}\inf_{\eta\in\mathbb{G}}E_{P}[(\xi-\eta)^{2}]=\inf
_{\eta\in\mathbb{G}}E_{\hat{P}}[(\xi-\eta)^{2}]=\inf_{\eta\in\mathbb{C}%
}E_{\hat{P}}[(\xi-\eta)^{2}].
\]
On the other hand, the inverse inequality is obvious. Then
\[
\inf_{\eta\in\mathbb{C}}E_{\hat{P}}[(\xi-\eta)^{2}]=\sup_{P\in\mathcal{P}}%
\inf_{\eta\in\mathbb{C}}E_{P}[(\xi-\eta)^{2}].
\]
Since $\hat{P}\in\mathcal{P}$, we have
\[
\sup_{P\in\mathcal{P}}\inf_{\eta\in\mathbb{C}}E_{P}[(\xi-\eta)^{2}]=\max
_{P\in\mathcal{P}}\inf_{\eta\in\mathbb{C}}E_{P}[(\xi-\eta)^{2}].
\]
This completes the proof.
\end{proof}

\begin{theorem}
\label{unique} If the sublinear operator $\rho$ is continuous from above on
$\mathcal{F}$ and proper, then for any given $\xi\in\mathbb{F}$, there exists
a unique optimal solution of problem \ref{Problem}.
\end{theorem}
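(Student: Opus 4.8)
The plan is to run a strict-convexity argument based on the parallelogram law, but the essential difficulty is that $\eta\mapsto\rho(\xi-\eta)^2$ is only convex, not strictly convex (it is a supremum of the strictly convex functionals $\eta\mapsto E_P[(\xi-\eta)^2]$), so the usual $L^2$-projection reasoning does not apply verbatim. To repair this I would pass to a single extremal measure attaining the supremum at the midpoint. Existence of an optimal $\hat\eta\in\mathbb{G}$ is already given by Theorem \ref{exist}; write $\alpha:=\inf_{\eta\in\mathbb{C}}\rho(\xi-\eta)^2$. Suppose $\hat\eta_1,\hat\eta_2$ are two optimal solutions, which by the truncation in Lemma \ref{lem-reformulate prob} may be taken in $\mathbb{G}$, and set $\bar\eta:=\tfrac12(\hat\eta_1+\hat\eta_2)\in\mathbb{G}$. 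The pointwise parallelogram identity gives
\[
(\xi-\bar\eta)^2=\frac{(\xi-\hat\eta_1)^2+(\xi-\hat\eta_2)^2}{2}-\frac{(\hat\eta_1-\hat\eta_2)^2}{4}.
\]

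First I would show that the supremum defining $\rho(\xi-\bar\eta)^2$ is attained. Since $Y:=(\xi-\bar\eta)^2$ is a bounded nonnegative function, it induces through $f\mapsto E_{P_0}[fY]$ a $\sigma(L^1(P_0),L^\infty(P_0))$-continuous linear functional on $L^1(P_0)$; by Proposition \ref{prop-fatou property} the set $\{dP/dP_0;P\in\mathcal{P}\}$ is $\sigma(L^1(P_0),L^\infty(P_0))$-compact, so this functional attains its maximum there. Hence there is some $\hat P\in\mathcal{P}$ with $\rho(\xi-\bar\eta)^2=E_{\hat P}[(\xi-\bar\eta)^2]$.

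Next I would apply $E_{\hat P}$ to the identity above. Using $E_{\hat P}[(\xi-\hat\eta_i)^2]\le\rho(\xi-\hat\eta_i)^2=\alpha$ for $i=1,2$, together with $\rho(\xi-\bar\eta)^2\ge\alpha$ (because $\bar\eta\in\mathbb{C}$), I obtain
\[
\alpha\le E_{\hat P}[(\xi-\bar\eta)^2]\le\frac{\alpha+\alpha}{2}-\frac14E_{\hat P}[(\hat\eta_1-\hat\eta_2)^2]=\alpha-\frac14E_{\hat P}[(\hat\eta_1-\hat\eta_2)^2].
\]
This forces $E_{\hat P}[(\hat\eta_1-\hat\eta_2)^2]\le0$, hence $\hat\eta_1=\hat\eta_2$ $\hat P$-a.s.

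The final and conceptually decisive step is to upgrade this to $P_0$-a.s. equality, and this is exactly where properness is needed. A priori one only has $\hat P\ll P_0$ (Proposition \ref{prop-fatou property}), so a $\hat P$-null set could carry positive $P_0$-mass and $\hat P$-uniqueness alone would be insufficient. Since $\rho$ is proper, however, every element of $\mathcal{P}$, in particular $\hat P$, is equivalent to $P_0$ (Definition \ref{def-proper}), so the $\hat P$-null set $\{\hat\eta_1\ne\hat\eta_2\}$ is also $P_0$-null, giving $\hat\eta_1=\hat\eta_2$ $P_0$-a.s. I expect the only genuine subtleties to be the weak-compactness attainment of the extremal measure $\hat P$ and the observation that properness is precisely what allows the transfer from $\hat P$-a.s. to $P_0$-a.s.; the convexity computation itself is routine.
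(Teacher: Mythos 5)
Your proof is correct, but it takes a genuinely different and more economical route than the paper's. The paper does not use the parallelogram law at the level of $\rho$'s representing measures in your way: it first invokes the Minimax theorem (via the $\sigma(L^{1}(P_{0}),L^{\infty}(P_{0}))$-compactness of the densities) together with Corollary \ref{cor-attainable-max-1} to produce a saddle point $(\hat{\eta},\hat{P})$, deduces that \emph{every} optimal solution has the form $\hat{\eta}=E_{\hat{P}}[\xi|\mathcal{C}]$ for some accompanying $\hat{P}\in\mathcal{P}$, and then, given two optimal solutions $\hat{\eta}_{i}=E_{\hat{P}_{i}}[\xi|\mathcal{C}]$, runs a computation under the mixture $P^{\lambda}=\lambda\hat{P}_{1}+(1-\lambda)\hat{P}_{2}$ with the conditional weights $\lambda_{\hat{P}_{i}}$, using the saddle-point inequality for $(\hat{\eta}_{1},\hat{P}_{1})$ to force the terms involving $(\hat{\eta}_{1}-\hat{\eta}_{2})^{2}$ to vanish; properness enters at the same final point as in your argument, to convert this into $P_{0}$-a.s.\ equality. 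Your argument replaces all of this by the midpoint $\bar{\eta}$, the pointwise parallelogram identity, and the attainment of $\rho(\xi-\bar{\eta})^{2}$ at a single $\hat{P}$ (which indeed follows from the weak compactness in Proposition \ref{prop-fatou property}, or even directly from Theorem \ref{theA.1}); this bypasses the Minimax theorem, Lemma \ref{lem-attainable-max} and Corollary \ref{cor-attainable-max-1} entirely. What the paper's longer route buys is the structural by-product, used later in the proof of Proposition \ref{prop-basic properties}, that the minimum mean square estimator always lies in $\{E_{P}[\xi|\mathcal{C}];P\in\mathcal{P}\}$; your proof gives uniqueness without this characterization.

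One blemish: the opening reduction, that the two optimal solutions ``may be taken in $\mathbb{G}$ by Lemma \ref{lem-reformulate prob}'', is both unnecessary and, as stated, logically backwards --- uniqueness within $\mathbb{G}$ would not by itself rule out a further optimal solution in $\mathbb{C}\setminus\mathbb{G}$, and that lemma only says truncation does not increase the risk, not that an optimal $\eta$ coincides a.s.\ with its truncation. Fortunately nothing in your computation uses the specific bound $M$: every element of $\mathbb{C}$ is bounded by definition, so $\bar{\eta}\in\mathbb{C}$, $(\xi-\bar{\eta})^{2}\in L^{\infty}(P_{0})$, and the whole argument runs verbatim for arbitrary optimal $\hat{\eta}_{1},\hat{\eta}_{2}\in\mathbb{C}$. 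Simply delete the reduction.
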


\begin{proof}
The existence result is proved in Theorem \ref{exist}. Now we prove the
optimal solution is unique. Since $\mathcal{P}$ is $\sigma(L^{1}%
(P_{0}),L^{\infty}(P_{0}))$-compact, by Minimax theorem, we have
\[
\inf_{\eta\in\mathbb{C}}\sup_{P\in\mathcal{P}}E_{P}[(\xi-\eta)^{2}]=\sup
_{P\in\mathcal{P}}\inf_{\eta\in\mathbb{C}}E_{P}[(\xi-\eta)^{2}].
\]
Since the optimal solution exists, by Corollary \ref{cor-attainable-max-1}, we
have
\[
\min_{\eta\in\mathbb{C}}\sup_{P\in\mathcal{P}}E_{P}[(\xi-\eta)^{2}]=\max
_{P\in\mathcal{P}}\inf_{\eta\in\mathbb{C}}E_{P}[(\xi-\eta)^{2}].
\]
Let $\hat{\eta}$ be an optimal solution and $\hat{P}$ as in Corollary
\ref{cor-attainable-max-1}. By Minimax theorem, $(\hat{\eta},\hat{P})$
is an saddle point, i.e.,
\[
E_{P}[(\xi-\hat{\eta})^{2}]\leq E_{\hat{P}}[(\xi-\hat{\eta})^{2}]\leq
E_{\hat{P}}[(\xi-\eta)^{2}],\quad\forall P\in\mathcal{P},\eta\in\mathbb{C}.
\]
This result shows that if $\hat{\eta}$ is an optimal solution, then there
exists a $\hat{P}\in\mathcal{P}$ such that $\hat{\eta}=E_{\hat{P}}%
[\xi|\mathcal{C}]$.

Suppose that there exist two optimal solutions $\hat{\eta}_{1}$ and $\hat
{\eta}_{2}$. Denote the accompanying probabilities by $\hat{P}_{1}$ and
$\hat{P}_{2}$ respectively. Then $\hat{\eta}_{1}=E_{\hat{P}_{1}}%
[\xi|\mathcal{C}]$ and $\hat{\eta}_{2}=E_{\hat{P}_{2}}[\xi|\mathcal{C}]$. Set
$P^{\lambda}:=\lambda\hat{P}_{1}+(1-\lambda)\hat{P}_{2}$, $\lambda\in(0,1)$.
Denote $\lambda E_{P^{\lambda}}[\frac{d\hat{P}_{1}}{dP^{\lambda}}%
|\mathcal{C}]$ by $\lambda_{\hat{P}_{1}}$ and $(1-\lambda)E_{P^{\lambda}%
}[\frac{d\hat{P}_{2}}{dP^{\lambda}}|\mathcal{C}]$ by $\lambda_{\hat{P}_{2}}$.
It is easy to see that $\lambda_{\hat{P}_{1}}+\lambda_{\hat{P}_{2}}=1$.
\[%
\begin{array}
[c]{l@{}l}
& E_{P^{\lambda}}[(\xi-\lambda_{\hat{P}_{1}}\hat{\eta}_{1}-\lambda_{\hat
{P}_{2}}\hat{\eta}_{2})^{2}]\\
= & E_{P^{\lambda}}[(\lambda_{\hat{P}_{1}}(\xi-\hat{\eta}_{1})+\lambda
_{\hat{P}_{2}}(\xi-\hat{\eta}_{2}))^{2}]\\
= & E_{P^{\lambda}}[\lambda_{\hat{P}_{1}}^{2}(\xi-\hat{\eta}_{1})^{2}%
+\lambda_{\hat{P}_{2}}^{2}(\xi-\hat{\eta}_{2})^{2}+2\lambda_{\hat{P}_{1}%
}\lambda_{\hat{P}_{2}}(\xi-\hat{\eta}_{1})(\xi-\hat{\eta}_{2})]\\
= & E_{P^{\lambda}}[\lambda_{\hat{P}_{1}}(\xi-\hat{\eta}_{1})^{2}%
+\lambda_{\hat{P}_{2}}(\xi-\hat{\eta}_{2})^{2}-\lambda_{\hat{P}_{1}}%
\lambda_{\hat{P}_{2}}(\hat{\eta}_{1}-\hat{\eta}_{2})^{2}]\\
= & \lambda E_{\hat{P}_{1}}[(\xi-\hat{\eta}_{1})^{2}]+(1-\lambda)E_{\hat
{P}_{2}}[(\xi-\hat{\eta}_{2})^{2}]\\
& +\lambda E_{\hat{P}_{1}}[\lambda_{\hat{P}_{2}}^{2}(\hat{\eta}_{1}-\hat{\eta
}_{2})^{2}]+(1-\lambda)E_{\hat{P}_{2}}[\lambda_{\hat{P}_{1}}^{2}(\hat{\eta
}_{1}-\hat{\eta}_{2})^{2}]\\
\geq & \alpha,
\end{array}
\]
where $\alpha:=\inf_{\eta\in\mathbb{C}}\rho(\xi-\eta)^{2}$.

Since $\rho$ is proper, we have that $E_{P^{\lambda}}[(\xi-\lambda_{\hat
{P}_{1}}\hat{\eta}_{1}-\lambda_{\hat{P}_{2}}\hat{\eta}_{2})^{2}]=\alpha$ if
and only if $\hat{\eta}_{1}=\hat{\eta}_{2}$ $P_{0}$-a.s..

On the other hand, since
\[
\hat{\eta}_{1}=E_{\hat{P}_{1}}[\xi|\mathcal{C}]=\frac{E_{P^{\lambda}}[\xi
\frac{d\hat{P}_{1}}{dP^{\lambda}}|\mathcal{C}]}{E_{P^{\lambda}}[\frac{d\hat
{P}_{1}}{dP^{\lambda}}|\mathcal{C}]}%
\]
and
\[
\hat{\eta}_{2}=E_{\hat{P}_{2}}[\xi|\mathcal{C}]=\frac{E_{P^{\lambda}}[\xi
\frac{d\hat{P}_{2}}{dP^{\lambda}}|\mathcal{C}]}{E_{P^{\lambda}}[\frac{d\hat
{P}_{2}}{dP^{\lambda}}|\mathcal{C}]},
\]
we deduce that $\lambda_{P_{1}}\hat{\eta}_{1}+\lambda_{P_{2}}\hat{\eta}%
_{2}=E_{P^{\lambda}}[\xi|\mathcal{C}]$. Since $(\hat{\eta}_{1},\hat{P}_{1})$
is a saddle point, we have
\[
E_{P^{\lambda}}[(\xi-E_{P^{\lambda}}[\xi|\mathcal{C}])^{2}]\leq E_{P^{\lambda
}}[(\xi-\hat{\eta}_{1})^{2}]\leq E_{\hat{P}_{1}}[(\xi-\hat{\eta}_{1}%
)^{2}]=\alpha.
\]
It yields that $E_{P^{\lambda}}[(\xi-E_{P^{\lambda}}[\xi|\mathcal{C}%
])^{2}]=\alpha$. Thus, $\hat{\eta}_{1}=\hat{\eta}_{2}$ $P_{0}$-a.s..
\end{proof}

\section{Characterizations of the minimum mean square estimator}

In this section, we obtain several characterizations of the minimum mean
square estimator.

\subsection{The orthogonal projection}

If $\mathcal{P}$\ contains only one probability $P$, then by probability
theory, the minimum mean square estimator $\hat{\eta}$ is just the conditional
expectation $E_{P}[\xi|\mathcal{C}]$. It is well known that a conditional
expectation is an orthogonal projection. In more details, for any $\eta
\in\mathbb{C}$,%
\[
E_{P}[(\xi\mathcal{-}\hat{\eta})\eta]=E_{P}[(\xi\mathcal{-}E_{P}%
[\xi|\mathcal{C}])\eta]=0.
\]

Does the above property still hold when $\rho$ is a sublinear operator? Note
that for any $\eta\in\mathbb{C}$,
\[%
\begin{array}
[c]{rl}
& \rho\lbrack(\xi-\hat{\eta})\eta]\\
= & \underset{P\in\mathcal{P}}{\sup}E_{P}[(\xi\mathcal{-}\hat{\eta})\eta]\\
= & \underset{P\in\mathcal{P}}{\sup}E_{P}[(\xi\mathcal{-}E_{\hat{P}}%
[\xi|\mathcal{C}])\eta]\\
\geq & 0.
\end{array}
\]
Thus, in this case, $\hat{\eta}$ is not the orthogonal projection for $\rho$.
But we notice that $\underset{\eta\in\mathbb{C}}{\inf}\rho\lbrack(\xi
-\hat{\eta})\eta]=0$. This motivates us to introduce the following definition.
For any given $\xi\in\mathbb{F}$, define $f$: $\mathbb{C}\mapsto\mathbb{R}$
by
\[
f(\tilde{\eta})=\inf_{\eta\in\mathbb{C}}\rho\lbrack(\xi-\tilde{\eta})\eta].
\]
Denote the kernel of $f$ by%
\[
\ker(f):=\{\tilde{\eta}\in\mathbb{C}\mid\;f(\tilde{\eta})=0\}.
\]

In the previous section, we prove that the minimum mean square estimator is
one element of the set $\{E_{P}[\xi|\mathcal{C}];P\in\mathcal{P}\}$. In the
following, we show that this set can be described by the kernel of $f$.

\begin{lemma}
\label{lem-kernel}If $\rho$ is a sublinear operator continuous from above on
$\mathcal{F}$, for any given $\xi\in\mathbb{F}$,
\[
\ker(f)=\{E_{P}[\xi|\mathcal{C}];P\in\mathcal{P}\}.
\]

\end{lemma}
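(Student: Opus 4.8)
The first observation is that $f$ is non-positive everywhere: taking $\eta=0\in\mathbb{C}$ gives $f(\tilde{\eta})\leq\rho[(\xi-\tilde{\eta})\cdot 0]=\rho(0)=0$ by constant preserving, so $\ker(f)$ coincides with $\{\tilde{\eta}\in\mathbb{C}:f(\tilde{\eta})\geq0\}$. The inclusion $\supseteq$ is then the easy orthogonality fact: if $\tilde{\eta}=E_{P}[\xi\mid\mathcal{C}]$ for some $P\in\mathcal{P}$, then $E_{P}[(\xi-\tilde{\eta})\eta]=E_{P}[\eta\,E_{P}[\xi-\tilde{\eta}\mid\mathcal{C}]]=0$ for every $\eta\in\mathbb{C}$, whence $\rho[(\xi-\tilde{\eta})\eta]=\sup_{Q\in\mathcal{P}}E_{Q}[(\xi-\tilde{\eta})\eta]\geq E_{P}[(\xi-\tilde{\eta})\eta]=0$. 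Taking the infimum over $\eta$ yields $f(\tilde{\eta})\geq0$, so $\tilde{\eta}\in\ker(f)$.

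For the reverse inclusion I would reformulate the condition $f(\tilde{\eta})=0$ through the densities $f_{P}:=\frac{dP}{dP_{0}}$ supplied by Proposition \ref{prop-fatou property}. For each $P\in\mathcal{P}$ set $\psi_{P}:=E_{P_{0}}[f_{P}(\xi-\tilde{\eta})\mid\mathcal{C}]\in L^{1}(\mathcal{C},P_{0})$. Using that $\eta$ is $\mathcal{C}$-measurable together with the tower property, $E_{P}[(\xi-\tilde{\eta})\eta]=E_{P_{0}}[\eta f_{P}(\xi-\tilde{\eta})]=E_{P_{0}}[\eta\psi_{P}]$, so $\sup_{P\in\mathcal{P}}E_{P_{0}}[\eta\psi_{P}]=\rho[(\xi-\tilde{\eta})\eta]$ for every $\eta\in\mathbb{C}$. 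The point of this rewriting is that, for a fixed $P$, the Bayes formula shows $\tilde{\eta}=E_{P}[\xi\mid\mathcal{C}]$ ($P$-a.s.) is equivalent to $\psi_{P}=0$ $P_{0}$-a.s.; on $\{E_{P_{0}}[f_{P}\mid\mathcal{C}]=0\}$ both $\psi_{P}$ and the conditional expectation vanish automatically, so this equivalence needs no equivalence of $P$ and $P_{0}$. Thus the inclusion $\subseteq$ reduces to showing $0\in\mathcal{S}$, where $\mathcal{S}:=\{\psi_{P}:P\in\mathcal{P}\}\subset L^{1}(\mathcal{C},P_{0})$.

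The heart of the proof is a separation argument. I would first check that $\mathcal{S}$ is convex and $\sigma(L^{1},L^{\infty})$-compact: the set $\mathcal{P}$ of linear functionals dominated by $\rho$ is convex (a convex combination of functionals dominated by $\rho$ is again dominated by $\rho$), hence so is $\{f_{P}\}$, which is moreover $\sigma(L^{1}(P_{0}),L^{\infty}(P_{0}))$-compact by Proposition \ref{prop-fatou property}; since $g\mapsto E_{P_{0}}[g(\xi-\tilde{\eta})\mid\mathcal{C}]$ is a bounded linear operator on $L^{1}(P_{0})$ and therefore weak-to-weak continuous, $\mathcal{S}$ is the image of a weakly compact convex set and is itself convex and weakly compact, in particular weakly closed. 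Now suppose $0\notin\mathcal{S}$. By Hahn–Banach separation applied in the dual pair $(L^{1}(\mathcal{C},P_{0}),L^{\infty}(\mathcal{C},P_{0}))$ there is an $\eta\in L^{\infty}(\mathcal{C},P_{0})$, which I may take bounded, i.e.\ in $\mathbb{C}$ (replacing it by $-\eta$ if necessary), with $\sup_{P\in\mathcal{P}}E_{P_{0}}[\eta\psi_{P}]<0$. By the identity above this reads $\rho[(\xi-\tilde{\eta})\eta]<0$, contradicting $f(\tilde{\eta})=\inf_{\eta'\in\mathbb{C}}\rho[(\xi-\tilde{\eta})\eta']=0\leq\rho[(\xi-\tilde{\eta})\eta]$. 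Hence $0\in\mathcal{S}$, producing a $P\in\mathcal{P}$ with $\tilde{\eta}=E_{P}[\xi\mid\mathcal{C}]$.

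The main obstacle is this last paragraph: verifying that $\mathcal{S}$ is genuinely weakly compact and convex (so that strict separation of the point $0$ is legitimate) and that the separating functional can be realized by a bounded $\mathcal{C}$-measurable $\eta$, together with the slightly delicate bookkeeping (the Bayes formula and the set $\{E_{P_{0}}[f_{P}\mid\mathcal{C}]=0\}$) needed to pass between ``$\psi_{P}=0$ in $L^{1}(P_{0})$'' and ``$\tilde{\eta}=E_{P}[\xi\mid\mathcal{C}]$'' when $P$ is only absolutely continuous with respect to $P_{0}$. The inclusion $\supseteq$ and the reduction in the second paragraph are routine by comparison.
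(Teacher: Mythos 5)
Your proof is correct, but the hard inclusion is carried out by a genuinely different mechanism than in the paper. For $\ker(f)\subset\{E_{P}[\xi|\mathcal{C}];P\in\mathcal{P}\}$, the paper observes that $\{(\xi-\tilde{\eta})\eta:\eta\in\mathbb{C}\}$ is a convex subset of $\mathbb{F}$ and applies the Mazur--Orlicz theorem directly to the sublinear functional $\rho$: this produces in one stroke a single $\tilde{P}\in\mathcal{P}$ with $\inf_{\eta\in\mathbb{C}}E_{\tilde{P}}[(\xi-\tilde{\eta})\eta]=\inf_{\eta\in\mathbb{C}}\rho[(\xi-\tilde{\eta})\eta]=0$, after which the sign argument (your ``find $\eta'$ with $E_{\tilde{P}}[(\xi-\tilde{\eta})\eta']<0$'' step) identifies $\tilde{\eta}=E_{\tilde{P}}[\xi|\mathcal{C}]$. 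You instead transport everything to $L^{1}(\mathcal{C},P_{0})$ via the densities $f_{P}$, show that $\mathcal{S}=\{\psi_{P}:P\in\mathcal{P}\}$ is convex and $\sigma(L^{1},L^{\infty})$-compact as the weak-to-weak continuous image of the compact density set from Proposition \ref{prop-fatou property}, and recover the representing measure by strict separation of $\{0\}$ from $\mathcal{S}$. Both routes are Hahn--Banach arguments at heart, but they consume different parts of the hypotheses: the paper needs only the representation $\rho=\sup_{P\in\mathcal{P}}E_{P}$ by probability measures (Mazur--Orlicz requires no compactness of $\mathcal{P}$), whereas you additionally invoke the Dunford--Pettis compactness in part (iii) of Proposition \ref{prop-fatou property}, without which your separation step would only place $0$ in the weak closure of $\mathcal{S}$, giving approximate rather than exact representation. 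What your route buys in exchange is explicitness and rigor in the measure-theoretic bookkeeping: the equivalence ``$\psi_{P}=0$ $P_{0}$-a.s.\ iff $\tilde{\eta}=E_{P}[\xi|\mathcal{C}]$ $P$-a.s.'', valid with $P$ merely absolutely continuous with respect to $P_{0}$, makes precise the null-set issues that the paper's terse final sentence passes over. Your easy inclusion $\supseteq$ is identical to the paper's.
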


\begin{proof}
For any $P\in\mathcal{P}$ and $\eta\in\mathbb{C}$, we have
\[
\rho\lbrack(\xi-E_{P}[\xi|\mathcal{C}])\eta]\geq E_{P}[(\xi-E_{P}%
[\xi|\mathcal{C}])\eta]=0.
\]
Then
\[
\inf_{\eta\in\mathbb{C}}\rho\lbrack(\xi-E_{P}[\xi|\mathcal{C}])\eta]\geq0.
\]
It is obvious that
\[
\inf_{\eta\in\mathbb{C}}\rho\lbrack(\xi-E_{P}[\xi|\mathcal{C}])\eta]\leq
\rho\lbrack(\xi-E_{P}[\xi|\mathcal{C}])0]=0,
\]
which leads to $\inf_{\eta\in\mathbb{C}}\rho\lbrack(\xi-E_{P}[\xi
|\mathcal{C}])\eta]=0$ for any $P\in\mathcal{P}$. Thus, $\{E_{P}%
[\xi|\mathcal{C}];P\in\mathcal{P}\}\subset\ker(f)$.

On the other hand, $\forall\tilde{\eta}\in\ker(f)$, since $\mathbb{C}$ is a
convex set and $\rho$ is a sublinear operator continuous from above, by
Mazur-Orlicz theorem, there exists a probability $\tilde{P}\in\mathcal{P}$ such
that
\[
\inf_{\eta\in\mathbb{C}}E_{\tilde{P}}[(\xi-\tilde{\eta})\eta]=\inf_{\eta
\in\mathbb{C}}\rho\lbrack(\xi-\tilde{\eta})\eta]=0.
\]
If $\tilde{\eta}\neq E_{\tilde{P}}[\xi|\mathcal{C}]$, then it is easy to find a 
$\eta^{\prime}\in\mathbb{C}$ such that $E_{\tilde{P}}[(\xi-\tilde{\eta}%
)\eta^{\prime}]<0$. Thus, $\tilde{\eta}_{0}=E_{\tilde{P}}[\xi|\mathcal{C}]$
and $\ker(f)\subset\{E_{P}[\xi|\mathcal{C}];P\in\mathcal{P}\}$.
\end{proof}

When $\mathcal{P}$ satisfies stable property which was introduced in
\cite{ADEH} (refer to \ref{append-stability} in Appendix B), we obtain
$\ker(f)$ of this case in the following theorem.

\begin{theorem}
\label{the4.1} If $\rho$ is a sublinear operator continuous from above on
$\mathcal{F}$ and the corresponding $\mathcal{P}$ is stable, then for given
$\xi\in\mathbb{F}$, $\ker(f)$ is just the set
\[
\mathbb{B}:=\{\tilde{\eta}\in\mathbb{C}\mid\mathop{\textrm{ess}\inf}_{P\in
\mathcal{P}}E_{P}[\xi|\mathcal{C}]\leq\tilde{\eta}\leq
\mathop{\textrm{ess}\sup}_{P\in\mathcal{P}}E_{P}[\xi|\mathcal{C}]\}.
\]

\end{theorem}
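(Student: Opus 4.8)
The plan is to build on Lemma \ref{lem-kernel}, which already identifies $\ker(f)$ with the set of conditional expectations $\{E_{P}[\xi|\mathcal{C}]:P\in\mathcal{P}\}$. Thus the theorem reduces to the set equality $\{E_{P}[\xi|\mathcal{C}]:P\in\mathcal{P}\}=\mathbb{B}$. Writing $U:=\mathop{\textrm{ess}\sup}_{P\in\mathcal{P}}E_{P}[\xi|\mathcal{C}]$ and $L:=\mathop{\textrm{ess}\inf}_{P\in\mathcal{P}}E_{P}[\xi|\mathcal{C}]$, the inclusion $\ker(f)\subseteq\mathbb{B}$ is immediate: for every $P\in\mathcal{P}$ the definitions of essential supremum and infimum give $L\le E_{P}[\xi|\mathcal{C}]\le U$ a.s., so each conditional expectation already lies in $\mathbb{B}$. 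All the work is in the reverse inclusion $\mathbb{B}\subseteq\ker(f)$, i.e. showing that every $\mathcal{C}$-measurable $\tilde{\eta}$ with $L\le\tilde{\eta}\le U$ is realized as $E_{P}[\xi|\mathcal{C}]$ for some $P\in\mathcal{P}$.

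I would break the reverse inclusion into two steps. First I would show that the two extremes are attained inside $\mathcal{P}$: there exist $P_{-},P_{+}\in\mathcal{P}$ with $E_{P_{-}}[\xi|\mathcal{C}]=L$ and $E_{P_{+}}[\xi|\mathcal{C}]=U$. The mechanism is stability: given $P_{1},P_{2}\in\mathcal{P}$ and the $\mathcal{C}$-set $A=\{E_{P_{1}}[\xi|\mathcal{C}]\ge E_{P_{2}}[\xi|\mathcal{C}]\}$, the measure obtained by pasting $P_{1}$ on $A$ and $P_{2}$ on $A^{c}$ (in the sense of Appendix B) again belongs to $\mathcal{P}$ and has conditional expectation $\max(E_{P_{1}}[\xi|\mathcal{C}],E_{P_{2}}[\xi|\mathcal{C}])$; hence $\{E_{P}[\xi|\mathcal{C}]:P\in\mathcal{P}\}$ is upward directed, and dually downward directed. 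I would then pick an increasing sequence $E_{P_{n}}[\xi|\mathcal{C}]\uparrow U$ and pass to the limit using the $\sigma(L^{1}(P_{0}),L^{\infty}(P_{0}))$-compactness of $\{dP/dP_{0}:P\in\mathcal{P}\}$ from Proposition \ref{prop-fatou property} (together with the Ces\`{a}ro/Koml\'{o}s device already used in Lemma \ref{lem-attainable-max}) to produce $P_{+}\in\mathcal{P}$ attaining $U$; the argument for $L$ is symmetric. This step is genuinely necessary, since $\tilde{\eta}=U$ and $\tilde{\eta}=L$ both belong to $\mathbb{B}$.

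Second, with $P_{-},P_{+}$ in hand I would interpolate. Given $\tilde{\eta}\in\mathbb{B}$, set the $\mathcal{C}$-measurable weight
\[
\lambda:=\frac{U-\tilde{\eta}}{U-L}\in[0,1]
\]
(with $\lambda:=1$ on $\{U=L\}$, where necessarily $\tilde{\eta}=L=U$), so that $\lambda L+(1-\lambda)U=\tilde{\eta}$. Writing $W_{\pm}:=(dP_{\pm}/dP_{0})/E_{P_{0}}[dP_{\pm}/dP_{0}|\mathcal{C}]$ for the conditional densities, I would consider the measure $P$ with density $\lambda W_{-}+(1-\lambda)W_{+}$ relative to $P_{0}$. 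Since $\lambda$ is $\mathcal{C}$-measurable one checks $E_{P_{0}}[\lambda W_{-}+(1-\lambda)W_{+}\,|\,\mathcal{C}]=1$ and
\[
E_{P}[\xi|\mathcal{C}]=\lambda E_{P_{-}}[\xi|\mathcal{C}]+(1-\lambda)E_{P_{+}}[\xi|\mathcal{C}]=\lambda L+(1-\lambda)U=\tilde{\eta},
\]
so $\tilde{\eta}\in\{E_{P}[\xi|\mathcal{C}]:P\in\mathcal{P}\}=\ker(f)$, provided this mixed measure lies in $\mathcal{P}$. That last point is exactly where stability re-enters: the $\mathcal{C}$-measurable mixing of the conditional laws of $P_{-}$ and $P_{+}$ is a pasting in the sense of Appendix B, which, together with the convexity of $\mathcal{P}$ (automatic, since $\mathcal{P}$ is the set of all linear operators dominated by the sublinear $\rho$), keeps $P$ inside $\mathcal{P}$.

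The hard part will be the two places where I invoke stability in its precise Appendix B form: (i) verifying that the pasting used for directedness and the mixing used for interpolation really land in $\mathcal{P}$, and (ii) the limiting argument that upgrades the directed family to an attained maximum and minimum. Step (i) is delicate because the conditional expectation is a ratio of densities, so I must normalize by $E_{P_{0}}[\cdot|\mathcal{C}]$ before combining and then certify the result as a legitimate density of a member of $\mathcal{P}$; step (ii) is delicate because conditional expectation does not pass to weak-$L^{1}$ limits for free, so the monotonicity of the chosen sequence and the uniform bound $|\xi|\le M$ must be exploited to control the ratio in the limit. If either the pasting or the attainment were to fail, $\mathbb{B}$ would be strictly larger than $\ker(f)$, so both are indispensable to the identity.
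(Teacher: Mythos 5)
Your reduction via Lemma \ref{lem-kernel} is legitimate: given that lemma, the theorem is equivalent to the set equality $\{E_{P}[\xi|\mathcal{C}]:P\in\mathcal{P}\}=\mathbb{B}$, and your easy inclusion is the same as the paper's. But the paper does not prove the hard inclusion by realizing elements of $\mathbb{B}$ as conditional expectations. It verifies the kernel condition directly: by stability (in its recursivity form, Proposition \ref{proB.4}), $\rho[(\xi-\tilde{\eta})\eta]=\rho[\mathop{\textrm{ess}\sup}_{P\in\mathcal{P}}E_{P}[(\xi-\tilde{\eta})\eta|\mathcal{C}]]$, and since $\eta$ is $\mathcal{C}$-measurable this essential supremum equals $\eta^{+}(\mathop{\textrm{ess}\sup}_{P}E_{P}[\xi|\mathcal{C}]-\tilde{\eta})-\eta^{-}(\mathop{\textrm{ess}\inf}_{P}E_{P}[\xi|\mathcal{C}]-\tilde{\eta})$, which is nonnegative for every $\tilde{\eta}\in\mathbb{B}$; monotonicity and constant preservation give $f(\tilde{\eta})\geq 0$, and $\eta=0$ gives $f(\tilde{\eta})\leq 0$. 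No attainment of the essential bounds, no weak compactness, and no construction of new measures is needed. Your route carries all the weight that the paper's recursivity argument bypasses, and both of your steps (attainment and interpolation) are substantial projects on their own.

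Within your route there is a concrete gap in the interpolation step. The density $\lambda W_{-}+(1-\lambda)W_{+}$, with $W_{\pm}=(dP_{\pm}/dP_{0})/E_{P_{0}}[dP_{\pm}/dP_{0}|\mathcal{C}]$, satisfies $E_{P_{0}}[\lambda W_{-}+(1-\lambda)W_{+}|\mathcal{C}]=1$, so the measure $P$ it defines has the same $\mathcal{C}$-marginal as $P_{0}$. But $P_{0}$ is only the dominating measure from Proposition \ref{prop-fatou property}; it need not belong to $\mathcal{P}$, and no element of $\mathcal{P}$ need have its $\mathcal{C}$-marginal. Already when $\mathcal{P}=\{Q\}$ is a singleton with $Q$ equivalent to $P_{0}$ but $Q\neq P_{0}$ on $\mathcal{C}$ (such a $\mathcal{P}$ is trivially stable and convex), your construction yields $P\neq Q$, hence $P\notin\mathcal{P}$: stability plus convexity cannot certify the membership you claim, because pasting in the sense of Definition \ref{append-stability} only recombines marginals and conditional laws of members of $\mathcal{P}$ (and, as stated there, only for members equivalent to $P_{0}$). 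The repair is to attach a marginal coming from $\mathcal{P}$, e.g.\ use the density $E_{P_{0}}[dP_{+}/dP_{0}|\mathcal{C}]\,(\lambda W_{-}+(1-\lambda)W_{+})$, and then prove membership by treating simple $\lambda$ through indicator pastings combined with convex combinations, passing to general $\lambda$ by the $L^{1}$-closedness of $\mathcal{P}$; none of this is in your sketch. (Your Step 1 can be pushed through — uniform integrability of $\{dP/dP_{0}\}$ together with the monotonicity of the approximating sequence controls the ratio in the limit — but it, too, is unnecessary for the theorem.)
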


\begin{proof}
By Lemma \ref{lem-kernel}, $\ker(f)$ is a subset of $\mathbb{B}$. So we only
need to prove $\mathbb{B}\subset\ker(f)$.

Since $\mathcal{P}$ is `stable', for any $\eta\in\mathbb{C}$, we have
\[%
\begin{array}
[c]{r@{}l}
& \rho\lbrack(\xi-\tilde{\eta})\eta]=\rho\lbrack\underset{P\in\mathcal{P}%
}{ess\sup}E_{P}[(\xi-\tilde{\eta})\eta|\mathcal{C}]]\\
= & \rho\lbrack\eta^{+}(\underset{P\in\mathcal{P}}{ess\sup}E_{P}%
[\xi|\mathcal{C}]-\tilde{\eta})-\eta^{-}(\underset{P\in\mathcal{P}}{ess\inf
}E_{P}[\xi|\mathcal{C}]-\tilde{\eta})],
\end{array}
\]
where $\eta^{+}:=\eta\bigvee0$ and $\eta^{-}:=-(\eta\bigwedge0)$.

It yields that for any $\tilde{\eta}\in\mathbb{B}$ and $\eta\in\mathbb{C}$,
\[
\rho\lbrack(\xi-\tilde{\eta})\eta]\geq0.
\]
Then for any $\tilde{\eta}\in\mathbb{B}$,
\[
\inf_{\eta\in\mathbb{C}}\rho\lbrack(\xi-\tilde{\eta})\eta]\geq0.
\]
It is easy to see that
\[
\inf_{\eta\in\mathbb{C}}\rho\lbrack(\xi-\tilde{\eta})\eta]\leq\rho\lbrack
(\xi-\tilde{\eta})0]=0.
\]
Thus, $\inf_{\eta\in\mathbb{C}}\rho\lbrack(\xi-\tilde{\eta})\eta]=0$ for any
$\tilde{\eta}\in\mathbb{B}$ which implies that $\mathbb{B}\subset\ker(f)$.
This completes the proof.
\end{proof}

\bigskip

\subsection{A sufficient and necessary condition}

We give a sufficient and necessary condition for the minimum mean square
estimator in this subsection. Especially, we do not need to assume that $\rho$
is continuous from above on $\mathcal{F}$.

\begin{lemma}
\label{lem-optimal-characterize-1} For a given $\xi\in\mathbb{F}$, if
$\hat{\eta}$ is an optimal solution of Problem \ref{Problem}, then
\[
\rho\lbrack(\xi-\eta)(\xi-\hat{\eta})]\geq\rho(\xi-\hat{\eta})^{2}%
,\quad\forall\eta\in\mathbb{C}.
\]

\end{lemma}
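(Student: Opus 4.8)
The plan is to perturb the optimizer $\hat\eta$ along a direction inside the convex set $\mathbb{C}$ and then let the perturbation shrink to extract a first-order inequality; no continuity assumption is needed, only the sublinearity axioms (iii)--(iv) and the fact that $\mathbb{C}$ is a linear space (hence convex). Fix an arbitrary $\eta\in\mathbb{C}$ and, for $\epsilon\in(0,\tfrac12]$, set $\eta_{\epsilon}:=(1-\epsilon)\hat\eta+\epsilon\eta$. Then $\eta_{\epsilon}\in\mathbb{C}$, so optimality of $\hat\eta$ yields $\rho(\xi-\eta_{\epsilon})^{2}\ge\rho(\xi-\hat\eta)^{2}$. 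The whole proof consists in expanding the left-hand side, isolating the cross term $(\xi-\eta)(\xi-\hat\eta)$, and driving the quadratic remainder to zero.

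The key computation I would do first is the algebraic identity obtained from $\xi-\eta_{\epsilon}=(1-\epsilon)(\xi-\hat\eta)+\epsilon(\xi-\eta)$, namely
\[
(\xi-\eta_{\epsilon})^{2}=(1-2\epsilon)(\xi-\hat\eta)^{2}+2\epsilon(\xi-\eta)(\xi-\hat\eta)+\epsilon^{2}(\eta-\hat\eta)^{2}.
\]
The decisive feature is that the coefficient $1-2\epsilon$ is nonnegative exactly because $\epsilon\le\tfrac12$, so I may legitimately apply sub-additivity (iii) followed by positive homogeneity (iv) to the three summands and obtain
\[
\rho(\xi-\eta_{\epsilon})^{2}\le(1-2\epsilon)\rho(\xi-\hat\eta)^{2}+2\epsilon\,\rho[(\xi-\eta)(\xi-\hat\eta)]+\epsilon^{2}\rho(\eta-\hat\eta)^{2}.
\]

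Combining this with the optimality bound $\rho(\xi-\hat\eta)^{2}\le\rho(\xi-\eta_{\epsilon})^{2}$ and cancelling the common term $(1-2\epsilon)\rho(\xi-\hat\eta)^{2}$ gives $2\epsilon\,\rho(\xi-\hat\eta)^{2}\le 2\epsilon\,\rho[(\xi-\eta)(\xi-\hat\eta)]+\epsilon^{2}\rho(\eta-\hat\eta)^{2}$. Dividing by $2\epsilon>0$ and letting $\epsilon\downarrow0$ removes the last term---here I only need $\rho(\eta-\hat\eta)^{2}<\infty$, which holds because $\eta-\hat\eta$ is bounded and $\rho$ is monotone and constant-preserving---leaving precisely $\rho(\xi-\hat\eta)^{2}\le\rho[(\xi-\eta)(\xi-\hat\eta)]$ for every $\eta\in\mathbb{C}$. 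I do not expect a genuine obstacle beyond this bookkeeping; the one point that must be handled with care, and the reason the statement holds without any continuity-from-above hypothesis, is that sub-additivity and positive homogeneity may be invoked only with the nonnegative weight $1-2\epsilon$, which is exactly why the admissible variations are restricted to $\epsilon\in(0,\tfrac12]$.
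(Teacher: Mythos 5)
Your proof is correct and follows essentially the same route as the paper's: both arguments perturb the optimizer to $(1-\epsilon)\hat\eta+\epsilon\eta\in\mathbb{C}$, expand the square into terms with nonnegative coefficients so that sub-additivity and positive homogeneity apply, invoke optimality, and extract the first-order condition as $\epsilon\downarrow0$. The only cosmetic differences are your regrouping of the expansion (with coefficient $1-2\epsilon$, forcing $\epsilon\le\tfrac12$, where the paper uses $(1-\lambda)^2$ and $2\lambda(1-\lambda)$, valid on all of $[0,1]$) and your divide-and-take-a-limit step in place of the paper's one-sided derivative of the quadratic upper bound at $\lambda=0$.
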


\begin{proof}
For any $\eta\in\mathbb{C}$, define $f$: $[0,1]\mapsto\mathbb{R}$ by
\[
f(\lambda)=\lambda^{2}\rho(\xi-\eta)^{2}+(1-\lambda)^{2}\rho(\xi-\hat{\eta
})^{2}+2\lambda(1-\lambda)\rho\lbrack(\xi-\eta)(\xi-\hat{\eta})].
\]
It is easy to check that
\[
f(\lambda)\geq\rho\lbrack\xi-(\lambda\eta+(1-\lambda)\hat{\eta})]^{2}\geq
\rho(\xi-\hat{\eta})^{2}.
\]
This implies that $f(\lambda)$ attains the minimum on $[0,1]$ when $\lambda
=0$. Then for any $\eta\in\mathbb{C}$,
\[
f^{\prime}(\lambda)|_{0+}=-2\rho(\xi-\hat{\eta})^{2}+2\rho\lbrack(\xi
-\eta)(\xi-\hat{\eta})]\geq0,
\]
i.e.,
\[
\rho\lbrack(\xi-\eta)(\xi-\hat{\eta})]\geq\rho(\xi-\hat{\eta})^{2}%
,\quad\forall\eta\in\mathbb{C}.
\]

\end{proof}

\begin{lemma}
\label{lem-holder-ine} For any $\xi_{1},\xi_{2}\in\mathbb{F}$ such that
$\rho(|\xi_{1}|^{p})>0$ and $\rho(|\xi_{2}|^{q})>0$, we have
\[
\rho(|\xi_{1}\xi_{2}|)\leq(\rho(|\xi_{1}|^{p}))^{\frac{1}{p}}(\rho(|\xi
_{2}|^{q}))^{\frac{1}{q}},
\]
where $1<p,\,q<\infty$ with $\frac{1}{p}+\frac{1}{q}=1$.
\end{lemma}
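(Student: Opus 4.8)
The plan is to mimic the classical proof of Hölder's inequality, replacing Lebesgue integration by the sublinear operator $\rho$ and relying only on the four defining properties in Definition \ref{def-sublinear-operator}. First I would normalize: set
\[
A:=(\rho(|\xi_{1}|^{p}))^{\frac{1}{p}},\qquad B:=(\rho(|\xi_{2}|^{q}))^{\frac{1}{q}}.
\]
Both are strictly positive precisely because of the two standing hypotheses $\rho(|\xi_{1}|^{p})>0$ and $\rho(|\xi_{2}|^{q})>0$, so dividing by $A$ and $B$ is legitimate. Note also that since $\xi_{1},\xi_{2}\in\mathbb{F}$ are bounded and measurable, the functions $|\xi_{1}\xi_{2}|$, $|\xi_{1}|^{p}$ and $|\xi_{2}|^{q}$ all lie in $\mathbb{F}$, so $\rho$ may be applied to each of them.

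Second, I would invoke Young's inequality pointwise: for nonnegative reals $a,b$ one has $ab\leq\frac{a^{p}}{p}+\frac{b^{q}}{q}$, whence for every $\omega\in\Omega$,
\[
\frac{|\xi_{1}(\omega)\xi_{2}(\omega)|}{AB}\leq\frac{1}{p}\frac{|\xi_{1}(\omega)|^{p}}{A^{p}}+\frac{1}{q}\frac{|\xi_{2}(\omega)|^{q}}{B^{q}}.
\]
Applying $\rho$ to both sides and using Monotonicity (i), then Sub-additivity (iii) together with Positive homogeneity (iv), gives
\[
\rho\!\left(\frac{|\xi_{1}\xi_{2}|}{AB}\right)\leq\frac{1}{pA^{p}}\rho(|\xi_{1}|^{p})+\frac{1}{qB^{q}}\rho(|\xi_{2}|^{q})=\frac{1}{p}+\frac{1}{q}=1,
\]
where the middle equality uses $\rho(|\xi_{1}|^{p})=A^{p}$ and $\rho(|\xi_{2}|^{q})=B^{q}$, and the last uses the conjugacy relation $\frac{1}{p}+\frac{1}{q}=1$. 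Finally, Positive homogeneity (iv) lets me pull the constant $\frac{1}{AB}$ out of the left-hand side, so the displayed inequality reads $\frac{1}{AB}\rho(|\xi_{1}\xi_{2}|)\leq 1$, which after rearrangement is exactly the asserted bound $\rho(|\xi_{1}\xi_{2}|)\leq AB$.

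There is no serious obstacle here: the argument is a faithful transcription of the classical normalization proof, and the only things one must watch are the strict positivity of $A$ and $B$ (guaranteed by the hypotheses, so that the normalization step is well defined) and the fact that all the functions involved remain in the domain $\mathbb{F}$ of $\rho$. I would not need the representation of $\rho$ as a supremum over $\mathcal{P}$, nor continuity from above; the four elementary axioms of a sublinear operator suffice.
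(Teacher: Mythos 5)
Your proof is correct and is essentially identical to the paper's: both normalize by $(\rho(|\xi_{1}|^{p}))^{1/p}$ and $(\rho(|\xi_{2}|^{q}))^{1/q}$, apply Young's inequality pointwise, and then use monotonicity, sub-additivity and positive homogeneity of $\rho$ to conclude. The only difference is that you spell out the role of each axiom and the positivity hypotheses a bit more explicitly than the paper does.
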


\begin{proof}
Set
\[
X=\frac{\xi_{1}}{(\rho(|\xi_{1}|^{p}))^{\frac{1}{p}}},\quad Y=\frac{\xi_{2}%
}{(\rho(|\xi_{2}|^{q}))^{\frac{1}{q}}}.
\]
Since $|XY|\leq\frac{|X|^{p}}{p}+\frac{|Y|^{q}}{q}$, then
\[
\rho(|XY|)\leq\rho(\frac{|X|^{p}}{p}+\frac{|Y|^{q}}{q})\leq\rho(\frac{|X|^{p}%
}{p})+\rho(\frac{|Y|^{q}}{q})=1,
\]
i.e.,
\[
\rho(|\xi_{1}\xi_{2}|)\leq(\rho(|\xi_{1}|^{p}))^{\frac{1}{p}}(\rho(|\xi
_{2}|^{q}))^{\frac{1}{q}}.
\]

\end{proof}

\begin{remark}
If $\rho$ can be represented by a family of probability measures, then the
condition $\rho(|\xi_{1}|^{p})>0$ and $\rho(|\xi_{2}|^{q})>0$ can be abandoned.
\end{remark}

\begin{theorem}
\label{ns} Suppose $\underset{\eta\in\mathbb{C}}{\inf}\rho(\xi-\eta)^{2}>0$.
For a given $\xi\in\mathbb{F}$, $\hat{\eta}$ is the optimal solution of
Problem \ref{Problem} if and only if it is the bounded $\mathcal{C}%
$-measurable solution of the following equation
\begin{equation}
\inf_{\eta\in\mathbb{C}}\rho\lbrack(\xi-\hat{\eta})(\xi-\eta)]=\rho(\xi
-\hat{\eta})^{2}. \label{equ-characterize-optimal-1}%
\end{equation}

\end{theorem}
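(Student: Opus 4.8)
The plan is to prove the two implications separately: necessity is essentially immediate from Lemma \ref{lem-optimal-characterize-1}, while sufficiency will rest on the Cauchy--Schwarz inequality supplied by Lemma \ref{lem-holder-ine} with $p=q=2$. At the outset I would record that the hypothesis $\inf_{\eta\in\mathbb{C}}\rho(\xi-\eta)^2>0$ forces $\rho(\xi-\eta)^2>0$ for every $\eta\in\mathbb{C}$, since each such value dominates the infimum; this bookkeeping fact is what makes both parts of the argument run.

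For necessity, I would assume $\hat{\eta}$ is optimal. Lemma \ref{lem-optimal-characterize-1} gives $\rho[(\xi-\eta)(\xi-\hat{\eta})]\geq\rho(\xi-\hat{\eta})^2$ for all $\eta\in\mathbb{C}$, so taking the infimum over $\eta$ yields $\inf_{\eta\in\mathbb{C}}\rho[(\xi-\hat{\eta})(\xi-\eta)]\geq\rho(\xi-\hat{\eta})^2$. For the reverse inequality I would simply insert the admissible choice $\eta=\hat{\eta}$ into the infimum, obtaining $\inf_{\eta\in\mathbb{C}}\rho[(\xi-\hat{\eta})(\xi-\eta)]\leq\rho(\xi-\hat{\eta})^2$; combining the two gives (\ref{equ-characterize-optimal-1}).

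For sufficiency, I would fix an arbitrary $\eta\in\mathbb{C}$ and build the chain $\rho(\xi-\hat{\eta})^2=\inf_{\eta'\in\mathbb{C}}\rho[(\xi-\hat{\eta})(\xi-\eta')]\leq\rho[(\xi-\hat{\eta})(\xi-\eta)]\leq\rho(|\xi-\hat{\eta}||\xi-\eta|)\leq(\rho(\xi-\hat{\eta})^2)^{1/2}(\rho(\xi-\eta)^2)^{1/2}$, where the first equality is (\ref{equ-characterize-optimal-1}), the second step is the trivial lower bound of an infimum, the third uses monotonicity of $\rho$, and the last is Lemma \ref{lem-holder-ine} (whose positivity hypotheses hold by the remark above). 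The decisive step is then to divide through by $(\rho(\xi-\hat{\eta})^2)^{1/2}>0$, which leaves $\rho(\xi-\hat{\eta})^2\leq\rho(\xi-\eta)^2$ for every $\eta\in\mathbb{C}$; since $\hat{\eta}\in\mathbb{C}$ the reverse inequality is automatic, so $\hat{\eta}$ is optimal.

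I expect the only delicate point to be the careful deployment of the strict positivity assumption, which is needed both to legitimize the division and to verify the hypotheses of Lemma \ref{lem-holder-ine}; otherwise the argument is a direct chaining of monotonicity with the Cauchy--Schwarz inequality. Notably, no continuity-from-above assumption on $\rho$ enters the proof, in agreement with the remark preceding the statement.
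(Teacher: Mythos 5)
Your proposal is correct and follows essentially the same route as the paper: necessity via Lemma \ref{lem-optimal-characterize-1} combined with the trivial bound from choosing $\eta=\hat{\eta}$, and sufficiency via Lemma \ref{lem-holder-ine} with $p=q=2$ followed by division by $(\rho(\xi-\hat{\eta})^{2})^{1/2}>0$. The only cosmetic difference is that you apply the Cauchy--Schwarz bound pointwise in $\eta$ and take the infimum at the end, whereas the paper takes the infimum inside the chain; your explicit verification of the positivity hypotheses of Lemma \ref{lem-holder-ine} is a welcome touch the paper leaves implicit.
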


\begin{proof}
$\Rightarrow$ Since $\hat{\eta}$ is the optimal solution of Problem
\ref{Problem}, by Lemma \ref{lem-optimal-characterize-1},
\[
\inf_{\eta\in\mathbb{C}}\rho\lbrack(\xi-\hat{\eta})(\xi-\eta)]\geq\rho
(\xi-\hat{\eta})^{2}.
\]
It is obvious that
\[
\inf_{\eta\in\mathbb{C}}\rho\lbrack(\xi-\hat{\eta})(\xi-\eta)]\leq\rho
(\xi-\hat{\eta})^{2}.
\]
Then $\hat{\eta}$ is the solution of (\ref{equ-characterize-optimal-1}).

$\Leftarrow$ If $\hat{\eta}\in\mathbb{C}$ satisfying equation
(\ref{equ-characterize-optimal-1}), by Lemma \ref{lem-holder-ine}, we have
\[%
\begin{array}
[c]{l@{}l}%
\rho(\xi-\hat{\eta})^{2} & =\underset{\eta\in\mathbb{C}}{\inf}\rho\lbrack
(\xi-\hat{\eta})(\xi-\eta)]\\
& \leq\underset{\eta\in\mathbb{C}}{\inf}(\rho(\xi-\hat{\eta})^{2})^{\frac
{1}{2}}(\rho(\xi-\eta)^{2})^{\frac{1}{2}}\\
& =(\rho(\xi-\hat{\eta})^{2})^{\frac{1}{2}}[\underset{\eta\in\mathbb{C}}{\inf
}\rho(\xi-\eta)^{2}]^{\frac{1}{2}}.
\end{array}
\]
Then $\rho(\xi-\hat{\eta})^{2}\leq\underset{\eta\in\mathbb{C}}{\inf}\rho
(\xi-\eta)^{2}$. This completes the proof.
\end{proof}

\begin{remark}
\label{rem6.1} If $\rho$ is a linear operator generated by probability measure
$P$, then
\[
E_{P}[E_{P}(\xi|\mathcal{C})\eta]=E_{P}[\xi\eta],\quad\forall\eta\in
\mathbb{C}.
\]
This means $E_{P}(\xi|\mathcal{C})$ not only satisfies
(\ref{equ-characterize-optimal-1}) but also satisfies the following equation
\[
\sup_{\eta\in\mathbb{C}}E_{P}[(\xi-\hat{\eta})(\xi-\eta)]=E_{P}(\xi-\hat{\eta
})^{2}.
\]

\end{remark}

\begin{remark}
If $\rho$ can be represented by a family of probability measures, then the
condition $\underset{\eta\in\mathbb{C}}{\inf}\rho(\xi-\eta)^{2}>0$ in Theorem
\ref{ns} can be abandoned since Lemma \ref{lem-holder-ine} still holds for
either $\rho(|\xi_{1}|^{p})=0$ or $\rho(|\xi_{2}|^{q})=0$.
\end{remark}

\section{Properties of the minimum mean square estimator}

In this section, we will first give the basic properties of the minimum mean
square estimator. Then we explore the relationship between the minimum mean
square estimator and the conditional coherent risk measure and conditional $g$-expectation.

For a given $\xi\in\mathbb{F}$, we will denote the minimum mean square
estimator with respect to $\mathcal{C}$ by $\rho(\xi|\mathcal{C})$. Then
$\rho(\xi|\mathcal{C})$ satisfies the following properties.

\begin{proposition}
\label{prop-basic properties} If the sublinear operator $\rho$ is continuous
from above on $\mathcal{F}$ and proper, then for any $\xi\in\mathbb{F}$, we have:

i) If $C_{1}\leq\xi(\omega)\leq C_{2}$ for two constants $C_{1}$ and $C_{2}$,
then $C_{1}\leq\rho(\xi|\mathcal{C})\leq C_{2}$.

ii) $\rho(\lambda\xi|\mathcal{C})=\lambda\rho(\xi|\mathcal{C})$ for
$\lambda\in\mathbb{R}$.

iii) For each $\eta_{0}\in\mathbb{C}$, $\rho(\xi+\eta_{0}|\mathcal{C}%
)=\rho(\xi|\mathcal{C})+\eta_{0}$.

iv) If under each $P\in\mathcal{P}$, $\xi$ is independent of the sub $\sigma
$-algebra $\mathcal{C}$, then $\rho(\xi|\mathcal{C})$ is a constant.
\end{proposition}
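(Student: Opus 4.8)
The plan is to use two complementary handles on $\rho(\xi|\mathcal{C})$: for the invariance statements ii) and iii) I would work directly on the optimization problem by reparametrizing the feasible set, whereas for the bound i) and the independence statement iv) I would invoke the characterization established in Theorem \ref{unique}, namely that the unique minimizer satisfies $\rho(\xi|\mathcal{C}) = E_{\hat{P}}[\xi|\mathcal{C}]$ for some $\hat{P}\in\mathcal{P}$.

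For iii), I would set $\eta' = \eta - \eta_0$; since $\eta_0\in\mathbb{C}$ this is a bijection of $\mathbb{C}$ onto itself, and $\rho((\xi+\eta_0-\eta)^2) = \rho((\xi-\eta')^2)$. Hence the two infima coincide and $\hat{\eta}$ is optimal for $\xi$ precisely when $\hat{\eta}+\eta_0$ is optimal for $\xi+\eta_0$; uniqueness then gives $\rho(\xi+\eta_0|\mathcal{C}) = \rho(\xi|\mathcal{C}) + \eta_0$. For ii) with $\lambda\neq 0$, I would write $(\lambda\xi-\eta)^2 = \lambda^2(\xi-\eta/\lambda)^2$ and apply positive homogeneity of $\rho$ (legitimate because $\lambda^2\geq 0$) to obtain $\rho((\lambda\xi-\eta)^2) = \lambda^2\rho((\xi-\eta/\lambda)^2)$; the substitution $\mu = \eta/\lambda$ is again a bijection of $\mathbb{C}$, so the minimizer for $\lambda\xi$ is $\lambda\hat{\eta}$. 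The case $\lambda=0$ is immediate, since $\rho(0)=0$ forces the optimal estimator to be $0$.

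For i), since $\rho(\xi|\mathcal{C}) = E_{\hat{P}}[\xi|\mathcal{C}]$ for some $\hat{P}\in\mathcal{P}$ and $C_1\leq\xi\leq C_2$, monotonicity of the classical conditional expectation yields $C_1 = E_{\hat{P}}[C_1|\mathcal{C}]\leq E_{\hat{P}}[\xi|\mathcal{C}]\leq E_{\hat{P}}[C_2|\mathcal{C}] = C_2$; because $\rho$ is proper, every $P\in\mathcal{P}$ is equivalent to $P_0$, so these inequalities hold $P_0$-a.s. For iv), independence of $\xi$ from $\mathcal{C}$ under each $P\in\mathcal{P}$ gives $E_P[\xi|\mathcal{C}] = E_P[\xi]$, a constant; applying this to $\hat{P}$ shows $\rho(\xi|\mathcal{C})$ is constant.

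The subtle point is that the reparametrizations in ii) and iii) establish equality of the infima, but to conclude the stated identities for the \emph{estimators} themselves I must appeal to uniqueness (Theorem \ref{unique}), which is exactly where the hypothesis that $\rho$ is proper becomes essential. I expect the only genuinely delicate step to be the sign in ii): because $\rho$ is merely positively homogeneous, I cannot pull a negative $\lambda$ out of $\rho$ directly, and instead must route it through the square, where $\lambda^2\geq 0$ makes homogeneity applicable, absorbing the sign into the bijection $\eta\mapsto\eta/\lambda$.
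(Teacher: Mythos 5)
Your proposal is correct and follows essentially the same route as the paper: parts i) and iv) via the characterization $\rho(\xi|\mathcal{C})=E_{\hat{P}}[\xi|\mathcal{C}]$ for some $\hat{P}\in\mathcal{P}$ from the proof of Theorem \ref{unique}, and parts ii) and iii) by transforming the optimization problem (pulling $\lambda^{2}$ out by positive homogeneity, shifting by $\eta_{0}$) and then invoking uniqueness. Your explicit remarks on the bijection of $\mathbb{C}$ and on routing the sign of $\lambda$ through the square are exactly the implicit steps in the paper's displayed computations.
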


\begin{proof}
i) If $C_{1}\leq\xi(\omega)\leq C_{2}$, then $C_{1}\leq E_{P}[\xi
|\mathcal{C}]\leq C_{2}$ for any $P\in\mathcal{P}$. Since $\rho(\xi
|\mathcal{C})\in\{E_{P}[\xi|\mathcal{C}];P\in\mathcal{P}\}$ (refer to the
proof of Theorem \ref{unique}), it is easy to see that $\rho(\xi|\mathcal{C})$
lies in $[C_{1},C_{2}]$.

ii) When $\lambda=0$, the statement is obvious. When $\lambda\neq0$, we have
\[
\lambda^{2}\rho(\xi-\frac{\rho(\lambda\xi|\mathcal{C})}{\lambda})^{2}%
=\rho(\lambda\xi-\rho(\lambda\xi|\mathcal{C}))^{2}=\inf_{\eta\in\mathbb{C}%
}\rho(\lambda\xi-\eta)^{2}=\lambda^{2}\inf_{\eta\in\mathbb{C}}\rho(\xi
-\eta)^{2}.
\]
It yields that
\[
\rho(\xi-\frac{\rho(\lambda\xi|\mathcal{C})}{\lambda})^{2}=\inf_{\eta
\in\mathbb{C}}\rho(\xi-\eta)^{2}.
\]
Thus, $\frac{\rho(\lambda\xi|\mathcal{C})}{\lambda}=\rho(\xi|\mathcal{C})$ due
to the uniqueness result in section 3.

iii) Note that
\[
\rho(\xi+\eta_{0}-(\eta_{0}+\rho(\xi|\mathcal{C})))^{2}=\rho(\xi-\rho
(\xi|\mathcal{C}))^{2}=\inf_{\eta\in\mathbb{C}}\rho(\xi-\eta)^{2}=\inf
_{\eta\in\mathbb{C}}\rho(\xi+\eta_{0}-\eta)^{2}.
\]
By the uniqueness of the minimum mean square estimator, we have $\rho(\xi
+\eta_{0}|\mathcal{C})=\eta_{0}+\rho(\xi|\mathcal{C})$.

iv) If under each $P\in\mathcal{P}$, $\xi$ is independent of the sub $\sigma
$-algebra $\mathcal{C}$, then $E_{P}[\xi|\mathcal{C}]$ is a constant for each
$P\in\mathcal{P}$. Since $\rho(\xi|\mathcal{C})\in\{E_{P}[\xi|\mathcal{C}%
];P\in\mathcal{P}\}$, $\rho(\xi|\mathcal{C})$ is also a constant.
\end{proof}

The coherent risk measures were introduced by Artzner et al. \cite{ADEH} and
the $g$-expectations were introduced by Peng \cite{Peng0}. The conditional coherent
risk measure and some special conditional $g$-expectations can be defined by
$\underset{P\in\mathcal{P}}{ess\sup}E_{P}[\xi|\mathcal{C}]$. In the next two
examples, we will show the minimum mean square estimator is different from the
conditional coherent risk measure and the conditional $g$-expectation.

\begin{example}
\label{exam-coherent risk}Let $\Omega=\{\omega_{1},\omega_{2}\}$,
$\mathcal{F}=\{\phi,\{\omega_{1}\},\{\omega_{2}\},\Omega\}$ and $\mathcal{C}%
=\{\phi,\Omega\}$. Set $P_{1}=\frac{1}{4}\mathbf{I}_{\{\omega_{1}\}}+\frac
{3}{4}\mathbf{I}_{\{\omega_{2}\}}$, $P_{2}=\frac{3}{4}\mathbf{I}_{\{\omega
_{1}\}}+\frac{1}{4}\mathbf{I}_{\{\omega_{2}\}}$ and $\mathcal{P}=\{\lambda
P_{1}+(1-\lambda)P_{2};\lambda\in\lbrack0,1]\}$. For each $\xi\in\mathbb{F}$,
define
\[
\rho(\xi)=\sup_{P\in\mathcal{P}}E_{P}[\xi].
\]

Set $\xi=2\mathbf{I}_{\{\omega_{1}\}}+8\mathbf{I}_{\{\omega_{2}\}}$. Then it
is easy to check
\[
\sup_{P\in\mathcal{P}}E_{P}[\xi]=6\frac{1}{2}\quad\text{and}\quad\rho
(\xi|\mathcal{C})=E_{\hat{P}}[\xi|\mathcal{C}]=5,
\]
where $\hat{P}=\frac{1}{2}\mathbf{I}_{\{\omega_{1}\}}+\frac{1}{2}%
\mathbf{I}_{\{\omega_{2}\}}$.
\end{example}

\begin{example}
\label{exam-g-expectation}Let $W(\cdot)$ be a standard $1$-dimensional
Brownian motion defined on a complete probability space $(\Omega
,\mathcal{F},P_{0})$. The information structure is given by a filtration
$\{\mathcal{F}_{t}\}_{0\leq t\leq T}$, which is generated by $W(\cdot)$ and
augmented by all the $P$-null sets. $M^{2}(0,T;\mathbb{R})$ denotes the space
of all $\mathcal{F}_{t}$-progressively measurable processes $y_{t}$ such that
$E_{P}\int_{0}^{T}|y_{t}|^{2}dt<\infty$. Let us consider the $g$-expectation
defined by the following BSDE:
\begin{equation}
y_{t}=\xi+\int_{t}^{T}|z_{s}|ds-\int_{t}^{T}z_{s}dW(s).
\label{equ-exam-g-expectation}%
\end{equation}
where $\xi$ is a bounded $\mathcal{F}_{T}$-measurable function. Here
$g(y,z)=|z|$. By the result in \cite{PP}, there exists a unique adapted pair
$(y,z)$ solves (\ref{equ-exam-g-expectation}). We call the solution $y_{t}$
the conditional $g$-expectation with respect to $\mathcal{F}_{t}$ and denote
it by $\mathcal{E}_{|z|}(\xi|\mathcal{F}_{t})$.

Consider the following linear case:
\begin{equation}
y_{t}=\xi+\int_{t}^{T}\mu_{s}z_{s}ds-\int_{t}^{T}z_{s}dW(s),
\label{equ-exam-coherent}%
\end{equation}
where $|\mu_{s}|\leq1$ $P_{0}$-a.s.. By Girsanov transform, there exists a
probability $P^{\mu}$ such that $\{y_{t}\}_{0\leq t\leq T}$ of
(\ref{equ-exam-coherent}) is a martingale under $P^{\mu}$. Set $\mathcal{P}%
:=\{P^{\mu}\mid|\mu_{s}|\leq1$ $P_{0}-$a.s.$\}$. By Theorem 2.1 in \cite{JC},
\[
\mathcal{E}_{|z|}(\xi)=\sup_{P^{\mu}\in\mathcal{P}}E_{P^{\mu}}[\xi
],\quad\forall\xi\in\mathcal{F}_{T}%
\]
and%
\[
\mathcal{E}_{|z|}(\xi|\mathcal{F}_{t})=\mathop{\mathrm{ess}\sup}_{P^{\mu}%
\in\mathcal{P}}E_{P^{\mu}}[\xi|\mathcal{F}_{t}].
\]

It is easy to see that $\mathcal{E}_{|z|}(\cdot)$ is a sublinear operator.
Denote the corresponding minimum mean square estimator by $\rho_{|z|}%
(\xi|\mathcal{F}_{t})$. We claim that the minimum mean square estimator
$\rho_{|z|}(\xi|\mathcal{F}_{t})$ does not coincide with $\mathcal{E}%
_{|z|}(\xi|\mathcal{F}_{t})$ for all bounded $\xi\in\mathcal{F}_{T}$.
Otherwise, if for all bounded $\xi\in\mathcal{F}_{T}$, $\rho_{|z|}%
(\xi|\mathcal{F}_{t})=\mathcal{E}_{|z|}(\xi|\mathcal{F}_{t})$, then by the
property ii) in Corollary \ref{prop-basic properties}, we have
\[
\mathop{\mathrm{ess}\sup}_{P^{\mu}\in\mathcal{P}}E_{P^{\mu}}[\xi
|\mathcal{F}_{t}]=\rho_{|z|}(\xi|\mathcal{F}_{t})=-\rho_{|z|}(-\xi
|\mathcal{F}_{t})=\mathop{\mathrm{ess}\inf}_{P^{\mu}\in\mathcal{P}}E_{P^{\mu}%
}[\xi|\mathcal{F}_{t}].
\]
Since the set $\mathcal{P}$ contains more than one probability measure, the
above equation can not be true for all bounded $\xi\in\mathcal{F}_{T}$. Thus,
our claim holds.
\end{example}

In the following, for simplicity, we denote $\underset{P\in\mathcal{P}%
}{ess\sup}E_{P}[\xi|\mathcal{C}]$ by $\eta_{ess}$. We first prove that
$\eta_{ess}$ is the optimal solution of a constrained mean square optimization problem.

\begin{definition}
\label{def-strict comparable} A sublinear operator $\rho$ is called strictly
comparable if for $\xi_{1}$, $\xi_{2}\in\mathbb{F}$ satisfying $\xi_{1}%
>\xi_{2}$ $P_{0}$-a.s., we have $\rho(\xi_{1})>\rho(\xi_{2})$.
\end{definition}

\begin{proposition}
\label{pro-penalize problem} Suppose that a sublinear operator $\rho$ is
continuous from above, strictly comparable and the representation set
$\mathcal{P}$ of $\rho$\ is stable. Then for a given $\xi\in\mathbb{F}$,
$\eta_{ess}$ is the unique solution of the following optimal problem:
\begin{equation}
\inf_{\eta\in\mathbb{C}}\sup_{\tilde{\eta}\in\mathbb{C}^{+}}\rho\lbrack
(\xi-\eta)^{2}+\tilde{\eta}(\xi-\eta)], \label{prob-penalized}%
\end{equation}
where $\mathbb{C}^{+}$ denotes all the nonnegative elements in $\mathbb{C}$.
\end{proposition}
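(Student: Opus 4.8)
The plan is to show that the inner supremum over $\mathbb{C}^{+}$ acts as an exact penalty that enforces the constraint $\eta\geq\eta_{ess}$, after which the problem collapses to minimizing $\rho(\xi-\eta)^{2}$, a quantity that is monotone above $\eta_{ess}$. First I would use the representation $\rho(\cdot)=\sup_{P\in\mathcal{P}}E_{P}[\cdot]$ (available since $\rho$ is continuous from above, Proposition \ref{prop-fatou property}) and interchange the two suprema:
\[
\sup_{\tilde{\eta}\in\mathbb{C}^{+}}\rho[(\xi-\eta)^{2}+\tilde{\eta}(\xi-\eta)]=\sup_{P\in\mathcal{P}}\Big(E_{P}[(\xi-\eta)^{2}]+\sup_{\tilde{\eta}\in\mathbb{C}^{+}}E_{P}[\tilde{\eta}(E_{P}[\xi|\mathcal{C}]-\eta)]\Big),
\]
where I used that $\tilde\eta$ is $\mathcal{C}$-measurable to write $E_P[\tilde\eta(\xi-\eta)]=E_P[\tilde\eta(E_P[\xi|\mathcal{C}]-\eta)]$. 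For fixed $P$ the last supremum is $0$ when $\eta\geq E_{P}[\xi|\mathcal{C}]$ $P$-a.s.\ (take $\tilde\eta=0$) and $+\infty$ otherwise (take $\tilde\eta=n\mathbf{I}_{\{E_{P}[\xi|\mathcal{C}]>\eta\}}$ and let $n\to\infty$). Hence the objective in (\ref{prob-penalized}) is $+\infty$ unless $\eta\geq E_{P}[\xi|\mathcal{C}]$ for every $P\in\mathcal{P}$, i.e.\ unless $\eta\geq\eta_{ess}$, and on this feasible set it equals $\rho(\xi-\eta)^{2}$.

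Next I would minimize $\rho(\xi-\eta)^{2}$ over $\{\eta\in\mathbb{C}:\eta\geq\eta_{ess}\}$. Writing $m_{P}=E_{P}[\xi|\mathcal{C}]$ and using the stability of $\mathcal{P}$ exactly as in the proof of Theorem \ref{the4.1}, I have $\rho(\xi-\eta)^{2}=\rho\big(\mathop{\mathrm{ess\,sup}}_{P\in\mathcal{P}}E_{P}[(\xi-\eta)^{2}|\mathcal{C}]\big)$, together with the conditional identity $E_{P}[(\xi-\eta)^{2}|\mathcal{C}]=(\eta-m_{P})^{2}+E_{P}[(\xi-m_{P})^{2}|\mathcal{C}]$. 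Since $\eta\geq\eta_{ess}\geq m_{P}$ for all $P$, each map $\eta\mapsto(\eta-m_{P})^{2}$ is nondecreasing on the feasible set; hence the conditional essential supremum is nondecreasing in $\eta$, and by monotonicity of $\rho$ so is the objective. Therefore the minimum is attained at the smallest feasible point $\eta=\eta_{ess}$, which (being bounded by $M$ and $\mathcal{C}$-measurable) lies in $\mathbb{C}$. This establishes that $\eta_{ess}$ is an optimal solution.

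For uniqueness, suppose $\eta$ is feasible with $A:=\{\eta>\eta_{ess}\}$ and $P_{0}(A)>0$. From $(\eta-m_{P})^{2}\geq(\eta_{ess}-m_{P})^{2}+(\eta-\eta_{ess})^{2}$ (valid because $\eta\geq\eta_{ess}\geq m_{P}$) and the fact that $(\eta-\eta_{ess})^{2}$ is $\mathcal{C}$-measurable, I would deduce
\[
\mathop{\mathrm{ess\,sup}}_{P\in\mathcal{P}}E_{P}[(\xi-\eta)^{2}|\mathcal{C}]\geq\mathop{\mathrm{ess\,sup}}_{P\in\mathcal{P}}E_{P}[(\xi-\eta_{ess})^{2}|\mathcal{C}]+(\eta-\eta_{ess})^{2},
\]
where the added term is strictly positive on $A$. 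Applying $\rho$ and invoking strict comparability (Definition \ref{def-strict comparable}) I would conclude $\rho(\xi-\eta)^{2}>\rho(\xi-\eta_{ess})^{2}$, so that $\eta$ cannot be optimal.

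The main obstacle is precisely this last step. The strict gap $(\eta-\eta_{ess})^{2}$ is positive only on the set $A$ and vanishes off it, whereas Definition \ref{def-strict comparable} yields a strict inequality for $\rho$ only from an inequality that is strict $P_{0}$-almost everywhere. Converting a gap supported on a proper subset into a strict increase of $\rho$ requires that $\rho$ be sensitive to increments on every set of positive $P_{0}$-measure, so I would need to extract this sensitivity from strict comparability (combined, if necessary, with stability and the representation); I expect this reconciliation — rather than the structural reduction above — to be the delicate point, and I would scrutinize whether strict comparability alone suffices or must be reinforced by a properness-type condition.
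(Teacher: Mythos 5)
Your structural reduction is sound and in fact parallels the paper's own proof: the paper likewise first shows that any minimizer of (\ref{prob-penalized}) must lie in $\mathbb{B}:=\{\eta\in\mathbb{C}:\eta\geq\eta_{ess}\ P_{0}\text{-a.s.}\}$ (the penalty blows up off $\mathbb{B}$), then that the inner supremum collapses to $\rho[(\xi-\eta)^{2}]$ on $\mathbb{B}$, and finally that $\eta_{ess}$ minimizes $\rho[(\xi-\eta)^{2}]$ over $\mathbb{B}$ --- the paper does this last step by sub-additivity plus stability (namely $\rho[(\eta-\eta_{ess})(\xi-\eta_{ess})]=\rho[(\eta-\eta_{ess})(\eta_{ess}-\eta_{ess})]=0$), while you use the conditional variance decomposition; these are equivalent. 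The genuine gap is that the proposition asserts \emph{uniqueness}, and you stop exactly there, leaving it unresolved. The resolution is that Definition \ref{def-strict comparable} must be read, and is used by the paper, in the sense ``$\xi_{1}\geq\xi_{2}$ with $P_{0}(\xi_{1}>\xi_{2})>0$ implies $\rho(\xi_{1})>\rho(\xi_{2})$''; with that reading your final display already finishes the proof, since then
\[
\rho[(\xi-\eta)^{2}]\geq\rho[(\xi-\eta_{ess})^{2}+(\eta-\eta_{ess})^{2}]>\rho[(\xi-\eta_{ess})^{2}]
\]
whenever $P_{0}(\eta\neq\eta_{ess})>0$. Note that the paper needs this very sensitivity already in its blow-up step: it takes $\tilde{\eta}=n\mathbf{I}_{A}$ with $A=\{\hat{\eta}<\eta_{ess}\}$ and uses stability plus strict comparability to get $\rho[\tilde{\eta}(\xi-\hat{\eta})]=n\,\rho[\mathbf{I}_{A}(\eta_{ess}-\hat{\eta})]\rightarrow\infty$, which is precisely an increment supported on a set of positive measure. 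So your instinct that the literal everywhere-strict reading does not suffice is correct, but the right conclusion is to adopt the stronger reading (as the paper implicitly does) rather than to hunt for a properness substitute.

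A second, smaller flaw: your penalization step, done measure by measure, yields the constraint ``$\eta\geq E_{P}[\xi|\mathcal{C}]$ $P$-a.s.\ for every $P\in\mathcal{P}$'', which coincides with ``$\eta\geq\eta_{ess}$ $P_{0}$-a.s.'' only when the elements of $\mathcal{P}$ are equivalent to $P_{0}$ --- an assumption this proposition does not make. On the possibly larger feasible set your monotonicity argument breaks down, because comparing $(\eta-m_{P})^{2}$ with $(\eta_{ess}-m_{P})^{2}$ requires $\eta\geq\eta_{ess}$ under each $P$. The paper sidesteps this by first applying stability, $\rho[\tilde{\eta}(\xi-\eta)]=\rho[\tilde{\eta}(\eta_{ess}-\eta)]$, so the penalty sees only the $\mathcal{C}$-measurable difference $\eta_{ess}-\eta$, and then invoking strict comparability (strong reading) to conclude the penalty is $+\infty$ whenever $P_{0}(\eta<\eta_{ess})>0$; you should restructure your first step the same way.
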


\begin{proof}
We first show if the optimal solution of (\ref{prob-penalized}) exists, the
optimal solution $\hat{\eta}$ of (\ref{prob-penalized}) lies in $\mathbb{B}$,
where $\mathbb{B}:=\{\eta\in\mathbb{C};\eta\geq\eta_{ess},P_{0}-a.s.\}$.

Since
\[%
\begin{array}
[c]{r@{}l}
& \inf_{\eta\in\mathbb{C}}\sup_{\tilde{\eta}\in\mathbb{C}^{+}}\rho[(\xi
-\eta)^{2}+\tilde{\eta}(\xi-\eta)]\\
\leq & \sup_{\tilde{\eta}\in\mathbb{C}^{+}}\rho[(\xi-\eta_{ess})^{2}%
+\tilde{\eta}(\xi-\eta_{ess})]\\
\leq & \rho[(\xi-\eta_{ess})^{2}]+\sup_{\tilde{\eta}\in\mathbb{C}^{+}}%
\rho[\tilde{\eta}(\xi-\eta_{ess})]\\
= & \rho[(\xi-\eta_{ess})^{2}]<\infty,
\end{array}
\]
then the value of our problem is finite.

On the other hand, we have
\[
\sup_{\tilde{\eta}\in\mathbb{C}^{+}}\rho[\tilde{\eta}(\xi-\hat{\eta})]\leq
\sup_{\tilde{\eta}\in\mathbb{C}^{+}}\rho[(\xi-\hat{\eta})^{2}+\tilde{\eta}%
(\xi-\hat{\eta})].
\]
Since the representation set $\mathcal{P}$ is `stable', then
\[
\sup_{\tilde{\eta}\in\mathbb{C}^{+}}\rho[\tilde{\eta}(\xi-\hat{\eta}%
)]=\sup_{\tilde{\eta}\in\mathbb{C}^{+}}\rho[\tilde{\eta}(\eta_{ess}-\hat{\eta
})].
\]
If $A:=\{\omega; \hat{\eta}<\eta_{ess}\}$ is not a $P_{0}$-null set, as $\rho$
is strictly comparable, we can choose $\tilde{\eta}$ to let $\rho[\tilde{\eta
}(\xi-\hat{\eta})]$ larger than any real number. Then $P_{0}(A)=0$ and
$\hat{\eta}\geq\eta_{ess}$ $P_{0}$-a.s..

For any $\eta\in\mathbb{B}$ and $\tilde{\eta}\in\mathbb{C}^{+}$, we have
\[
\rho[(\xi-\eta)^{2}+\tilde{\eta}(\xi-\eta)]-\rho[(\xi-\eta)^{2}]\leq
\rho[\tilde{\eta}(\xi-\eta)]\leq0.
\]
Then for any $\eta\in\mathbb{B}$,
\[
\sup_{\tilde{\eta}\in\mathbb{C}^{+}}\rho[(\xi-\eta)^{2}+\tilde{\eta}(\xi
-\eta)]\leq\rho[(\xi-\eta)^{2}].
\]
On the other hand,
\[
\sup_{\tilde{\eta}\in\mathbb{C}^{+}}\rho[(\xi-\eta)^{2}+\tilde{\eta}(\xi
-\eta)]\geq\rho[(\xi-\eta)^{2}+0(\xi-\eta)]=\rho[(\xi-\eta)^{2}].
\]
Then for any $\eta\in\mathbb{B}$,
\[
\rho[(\xi-\eta)^{2}]=\sup_{\tilde{\eta}\in\mathbb{C}^{+}}\rho[(\xi-\eta
)^{2}+\tilde{\eta}(\xi-\eta)].
\]
We have
\[
\inf_{\eta\in B}\sup_{\tilde{\eta}\in\mathbb{C}^{+}}\rho[(\xi-\eta)^{2}%
+\tilde{\eta}(\xi-\eta)]=\inf_{\eta\in B}\rho[(\xi-\eta)^{2}].
\]

For any $\eta\in B$,
\[%
\begin{array}
[c]{r@{}l}
& \rho[(\xi-\eta)^{2}]=\rho[(\xi-\eta_{ess})^{2}+(\eta_{ess}-\eta)^{2}%
-2(\eta-\eta_{ess})(\xi-\eta_{ess})]\\
\geq & \rho[(\xi-\eta_{ess})^{2}+(\eta_{ess}-\eta)^{2}]-2\rho[(\eta-\eta
_{ess})(\xi-\eta_{ess})]\geq\rho[(\xi-\eta_{ess})^{2}],
\end{array}
\]
which means $\eta_{ess}$ is the best mean-square estimate among $B$.

On the other hand, for any $\eta\not \in B$, $\sup_{\tilde{\eta}\in
\mathbb{C}^{+}}\rho\lbrack(\xi-\eta)^{2}+\tilde{\eta}(\xi-\eta)]$ is equal to
$\infty$. Then $\eta_{ess}$ is also the optimal solution of
(\ref{prob-penalized}). The uniqueness is from $\rho$ is strictly comparable.
\end{proof}

Proposition \ref{pro-penalize problem} is just equivalent to say $\eta_{ess}$
is the unique solution of the following problem:
\[%
\begin{array}
[c]{r@{}c}
& \inf_{\eta\in\mathbb{C}}\rho\lbrack(\xi-\eta)^{2}]\\
& \mathrm{subject\,to}\quad\rho\lbrack(\eta_{ess}-\eta)^{+}]=0.
\end{array}
\]

Using the same method as in Proposition \ref{pro-penalize problem}, we can get
$\mathrm{ess}\inf_{P\in\mathcal{P}}E_{P\in\mathcal{P}}[\xi|\mathcal{C}]$ is
the solution of the following problem:
\[%
\begin{array}
[c]{r@{}c}
& \inf_{\eta\in\mathbb{C}}\rho\lbrack(\xi-\eta)^{2}]\\
& \mathrm{subject\,to}\quad\rho\lbrack(\eta-\mathrm{ess}\inf_{P\in\mathcal{P}%
}E_{P\in\mathcal{P}}[\xi|\mathcal{C}])^{+}]=0.
\end{array}
\]

We obtain the following necessary and sufficient condition for $\eta_{ess}$
being the minimum mean square estimator.

\begin{theorem}
\label{relation} Under the assumptions in Proposition
\ref{pro-penalize problem}, for a given $\xi\in\mathbb{F}$, $\eta_{ess}$ is
the optimal solution of Problem \ref{Problem} if and only if
\[
\inf_{\eta\in\mathbb{C}}\sup_{\tilde{\eta}\in\mathbb{C}^{+}}\rho\lbrack
(\xi-\eta)^{2}+\tilde{\eta}(\xi-\eta)]=\sup_{\tilde{\eta}\in\mathbb{C}^{+}%
}\inf_{\eta\in\mathbb{C}}\rho\lbrack(\xi-\eta)^{2}+\tilde{\eta}(\xi-\eta)].
\]

\end{theorem}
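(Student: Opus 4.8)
The plan is to evaluate both sides of the claimed minimax identity explicitly and thereby reduce the equivalence to a single scalar equation expressing optimality. Write $\alpha:=\inf_{\eta\in\mathbb{C}}\rho[(\xi-\eta)^{2}]$ for the optimal value of Problem \ref{Problem}, so that by definition $\eta_{ess}$ is an optimal solution of Problem \ref{Problem} if and only if $\rho[(\xi-\eta_{ess})^{2}]=\alpha$. The left-hand side of the asserted identity is precisely the value of problem (\ref{prob-penalized}), whose unique minimizer is $\eta_{ess}$ by Proposition \ref{pro-penalize problem}. Moreover, the proof of that proposition shows that for $\eta$ in $\mathbb{B}=\{\eta\in\mathbb{C};\eta\geq\eta_{ess}\}$ the inner supremum collapses to $\rho[(\xi-\eta)^{2}]$; evaluating at the minimizer $\eta_{ess}\in\mathbb{B}$ shows the left-hand side equals $\rho[(\xi-\eta_{ess})^{2}]$.

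First I would show that the right-hand side always equals $\alpha$. On the one hand, taking $\tilde{\eta}=0\in\mathbb{C}^{+}$ gives inner value $\inf_{\eta\in\mathbb{C}}\rho[(\xi-\eta)^{2}]=\alpha$, so the right-hand side is at least $\alpha$. On the other hand, fix $\tilde{\eta}\in\mathbb{C}^{+}$ and complete the square: with the substitution $\zeta=\eta-\tilde{\eta}/2$, which is a bijection of $\mathbb{C}$ onto itself because $\tilde{\eta}\in\mathbb{C}$, one has the pointwise identity $(\xi-\eta)^{2}+\tilde{\eta}(\xi-\eta)=(\xi-\zeta)^{2}-\tilde{\eta}^{2}/4$. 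Since $\tilde{\eta}^{2}/4\geq0$, monotonicity of $\rho$ gives $\rho[(\xi-\zeta)^{2}-\tilde{\eta}^{2}/4]\leq\rho[(\xi-\zeta)^{2}]$, whence $\inf_{\eta\in\mathbb{C}}\rho[(\xi-\eta)^{2}+\tilde{\eta}(\xi-\eta)]\leq\alpha$ for every $\tilde{\eta}\in\mathbb{C}^{+}$. Taking the supremum over $\tilde{\eta}$ yields that the right-hand side is at most $\alpha$, hence exactly $\alpha$.

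Combining the two evaluations, the asserted minimax identity reads $\rho[(\xi-\eta_{ess})^{2}]=\alpha$, which is exactly the statement that $\eta_{ess}$ attains the infimum in Problem \ref{Problem}, i.e. that $\eta_{ess}$ is the optimal solution. This delivers both implications simultaneously. As a consistency check, the general max-min inequality already forces the right-hand side to be no larger than the left-hand side, matching the elementary bound $\alpha\leq\rho[(\xi-\eta_{ess})^{2}]$.

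I expect the only delicate point to be the bookkeeping of the left-hand side: one must extract from the proof of Proposition \ref{pro-penalize problem} not merely that $\eta_{ess}$ is the minimizer of (\ref{prob-penalized}) but that the optimal value attained there is $\rho[(\xi-\eta_{ess})^{2}]$, by using that the penalized supremum reduces to $\rho[(\xi-\eta)^{2}]$ on $\mathbb{B}$. The completing-the-square computation for the right-hand side is routine; the points worth verifying are that the affine shift by $\tilde{\eta}/2$ preserves $\mathbb{C}$ and that it is monotonicity, rather than sub-additivity, that lets one discard the nonnegative term $\tilde{\eta}^{2}/4$.
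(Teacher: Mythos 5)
Your proof is correct and rests on the same two computations as the paper's, but it assembles them into a cleaner argument that bypasses the paper's saddle-point step. Both proofs use (a) Proposition \ref{pro-penalize problem} together with the collapse of the inner supremum on $\mathbb{B}$ (stability plus sub-additivity, valid at $\eta_{ess}\in\mathbb{B}$) to control the inf-sup side, and (b) completing the square plus monotonicity to control the sup-inf side. The paper, however, treats the two implications separately: its converse only establishes that the supremum on the right is attained at $\tilde{\eta}=0$ and the infimum on the left at $\eta_{ess}$, and then argues that the assumed equality makes $(\eta_{ess},0)$ a saddle point, whose second inequality yields optimality. You instead push (a) and (b) to their conclusions unconditionally: the left side always equals $\rho[(\xi-\eta_{ess})^{2}]$ and the right side always equals $\alpha=\inf_{\eta\in\mathbb{C}}\rho[(\xi-\eta)^{2}]$, so the asserted identity is literally the scalar equation $\rho[(\xi-\eta_{ess})^{2}]=\alpha$, i.e.\ the definition of optimality of $\eta_{ess}$, and both implications drop out simultaneously. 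What your route buys is economy and transparency: no appeal to minimax theory is needed, and the theorem's content is exposed as ``both sides of the identity have explicit, hypothesis-free values.'' What the paper's route exhibits, at the cost of the extra minimax appeal, is the saddle-point structure $(\eta_{ess},0)$ itself. Your two flagged caveats are exactly the right ones: the value of the penalized problem (not merely the identity of its minimizer) must be extracted from the \emph{proof} of Proposition \ref{pro-penalize problem} rather than its statement, and it is monotonicity, not sub-additivity, that discards the term $\tilde{\eta}^{2}/4$.
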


\begin{proof}
If $\eta_{ess}$ is the optimal solution of Problem \ref{Problem}, then
\[
\sup_{\tilde{\eta}\in\mathbb{C}^{+}}\inf_{\eta\in\mathbb{C}}\rho\lbrack
(\xi-\eta)^{2}+\tilde{\eta}(\xi-\eta)]\geq\inf_{\eta\in\mathbb{C}}\rho
\lbrack(\xi-\eta)^{2}]=\rho\lbrack(\xi-\eta_{ess})^{2}].
\]
On the other hand, by Proposition \ref{pro-penalize problem},
\[%
\begin{array}
[c]{r@{}l}%
\inf_{\eta\in\mathbb{C}}\sup_{\tilde{\eta}\in\mathbb{C}^{+}}\rho\lbrack
(\xi-\eta)^{2}+\tilde{\eta}(\xi-\eta)] & =\sup_{\tilde{\eta}\in\mathbb{C}^{+}%
}\rho\lbrack(\xi-\eta_{ess})^{2}+\tilde{\eta}(\xi-\eta_{ess})]\\
& \leq\rho\lbrack(\xi-\eta_{ess})^{2}]+\sup_{\tilde{\eta}\in\mathbb{C}^{+}%
}\rho\lbrack\tilde{\eta}(\xi-\eta_{ess})]\\
& =\rho\lbrack(\xi-\eta_{ess})^{2}].
\end{array}
\]
Then
\[
\inf_{\eta\in\mathbb{C}}\sup_{\tilde{\eta}\in\mathbb{C}^{+}}\rho\lbrack
(\xi-\eta)^{2}+\tilde{\eta}(\xi-\eta)]\leq\sup_{\tilde{\eta}\in\mathbb{C}^{+}%
}\inf_{\eta\in\mathbb{C}}\rho\lbrack(\xi-\eta)^{2}+\tilde{\eta}(\xi-\eta)].
\]
Since
\[
\inf_{\eta\in\mathbb{C}}\sup_{\tilde{\eta}\in\mathbb{C}^{+}}\rho\lbrack
(\xi-\eta)^{2}+\tilde{\eta}(\xi-\eta)]\geq\sup_{\tilde{\eta}\in\mathbb{C}^{+}%
}\inf_{\eta\in\mathbb{C}}\rho\lbrack(\xi-\eta)^{2}+\tilde{\eta}(\xi-\eta)]
\]
is obvious, we have
\[
\inf_{\eta\in\mathbb{C}}\sup_{\tilde{\eta}\in\mathbb{C}^{+}}\rho\lbrack
(\xi-\eta)^{2}+\tilde{\eta}(\xi-\eta)]=\sup_{\tilde{\eta}\in\mathbb{C}^{+}%
}\inf_{\eta\in\mathbb{C}}\rho\lbrack(\xi-\eta)^{2}+\tilde{\eta}(\xi-\eta)].
\]

Conversely, for any $\tilde{\eta}\in\mathbb{C}^{+}$,
\[
\inf_{\eta\in\mathbb{C}}\rho\lbrack(\xi-\eta)^{2}+\tilde{\eta}(\xi-\eta
)]=\inf_{\eta\in\mathbb{C}}\rho\lbrack(\xi+\frac{\tilde{\eta}}{2}-\eta
)^{2}-\frac{\tilde{\eta}^{2}}{4}]\leq\inf_{\eta\in\mathbb{C}}\rho\lbrack
(\xi+\frac{\tilde{\eta}}{2}-\eta)^{2}]=\inf_{\eta\in\mathbb{C}}\rho\lbrack
(\xi-\eta)^{2}].
\]
Then
\[
\sup_{\tilde{\eta}\in\mathbb{C}^{+}}\inf_{\eta\in\mathbb{C}}\rho\lbrack
(\xi-\eta)^{2}+\tilde{\eta}(\xi-\eta)]\leq\inf_{\eta\in\mathbb{C}}\rho
\lbrack(\xi-\eta)^{2}].
\]
Since
\[
\sup_{\tilde{\eta}\in\mathbb{C}^{+}}\inf_{\eta\in\mathbb{C}}\rho\lbrack
(\xi-\eta)^{2}+\tilde{\eta}(\xi-\eta)]\geq\inf_{\eta\in\mathbb{C}}\rho
\lbrack(\xi-\eta)^{2}+0(\xi-\eta)]=\inf_{\eta\in\mathbb{C}}\rho\lbrack
(\xi-\eta)^{2}]
\]
is obvious, we have
\[
\sup_{\tilde{\eta}\in\mathbb{C}^{+}}\inf_{\eta\in\mathbb{C}}\rho\lbrack
(\xi-\eta)^{2}+\tilde{\eta}(\xi-\eta)]=\inf_{\eta\in\mathbb{C}}\rho\lbrack
(\xi-\eta)^{2}].
\]
This shows $\sup_{\tilde{\eta}\in\mathbb{C}^{+}}\inf_{\eta\in\mathbb{C}}%
\rho\lbrack(\xi-\eta)^{2}+\tilde{\eta}(\xi-\eta)]$ attains its supremum when
$\tilde{\eta}=0$. By Proposition \ref{pro-penalize problem}, we also know
$\inf_{\eta\in\mathbb{C}}\sup_{\tilde{\eta}\in\mathbb{C}^{+}}\rho\lbrack
(\xi-\eta)^{2}+\tilde{\eta}(\xi-\eta)]$ attains its infimum when $\eta
=\eta_{ess}$. Since
\[
\inf_{\eta\in\mathbb{C}}\sup_{\tilde{\eta}\in\mathbb{C}^{+}}\rho\lbrack
(\xi-\eta)^{2}+\tilde{\eta}(\xi-\eta)]=\sup_{\tilde{\eta}\in\mathbb{C}^{+}%
}\inf_{\eta\in\mathbb{C}}\rho\lbrack(\xi-\eta)^{2}+\tilde{\eta}(\xi-\eta)],
\]
then
\[
\min_{\eta\in\mathbb{C}}\sup_{\tilde{\eta}\in\mathbb{C}^{+}}\rho\lbrack
(\xi-\eta)^{2}+\tilde{\eta}(\xi-\eta)]=\max_{\tilde{\eta}\in\mathbb{C}^{+}%
}\inf_{\eta\in\mathbb{C}}\rho\lbrack(\xi-\eta)^{2}+\tilde{\eta}(\xi-\eta)].
\]
By Minimax theorem, $(\eta_{ess},0)$ is the saddle point, i.e., for any
$\eta\in\mathbb{C}$ and $\tilde{\eta}\in\mathbb{C}^{+}$, we have
\[
\rho\lbrack(\xi-\eta_{ess})^{2}+\tilde{\eta}(\xi-\eta_{ess})]\leq\rho
\lbrack(\xi-\eta_{ess})^{2}]\leq\rho\lbrack(\xi-\eta)^{2}].
\]
The second inequality means $\eta_{ess}$ is the optimal solution of Problem
\ref{Problem}.
\end{proof}

\appendix

\section{Some basic results}

In this section, some results are given which are used in our paper.

\begin{theorem}
\label{theA.1} If $\rho$ is a sublinear operator and $\mathcal{P}$ is the
family of all linear operators dominated by $\rho$, then
\[
\rho(\xi)=\max_{P\in\mathcal{P}}E_{P}[\xi], \forall\xi\in\mathbb{F}.
\]

\end{theorem}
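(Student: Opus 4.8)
The plan is to recognize this as the Hahn--Banach representation of a sublinear functional and to prove it by a single one-dimensional extension argument applied at each fixed point. The easy half is immediate: by the very definition of $\mathcal{P}$ as the collection of all linear $E_P$ satisfying $E_P[\xi]\le\rho(\xi)$ for every $\xi$, we get $\sup_{P\in\mathcal{P}}E_P[\xi]\le\rho(\xi)$ at once. The entire content of the theorem is therefore the reverse inequality together with attainment: for each fixed $\xi_0\in\mathbb{F}$ I must exhibit a single $E_P\in\mathcal{P}$ with $E_P[\xi_0]=\rho(\xi_0)$.

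To produce such a functional I would fix $\xi_0$, set $V:=\{t\xi_0:t\in\mathbb{R}\}$, and define the linear map $\ell$ on $V$ by $\ell(t\xi_0):=t\,\rho(\xi_0)$. First I would verify that $\ell$ is dominated by $\rho$ on $V$, distinguishing the two signs of $t$: for $t\ge0$ positive homogeneity (axiom (iv) of Definition \ref{def-sublinear-operator}) gives $\ell(t\xi_0)=t\rho(\xi_0)=\rho(t\xi_0)$, while for $t<0$, writing $t=-s$ with $s>0$, the required bound $-s\rho(\xi_0)\le\rho(-s\xi_0)=s\rho(-\xi_0)$ reduces to $\rho(\xi_0)+\rho(-\xi_0)\ge0$. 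This last inequality follows from subadditivity (axiom (iii)) together with $\rho(0)=0$ (axiom (iv) with $\lambda=0$, or axiom (ii)), since $0=\rho(0)=\rho(\xi_0+(-\xi_0))\le\rho(\xi_0)+\rho(-\xi_0)$.

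Having checked domination on $V$, I would invoke the Hahn--Banach theorem for a sublinear majorant: since $\rho$ is subadditive and positively homogeneous on all of $\mathbb{F}$ and $\ell\le\rho$ on the subspace $V$, there is a linear extension $E_P:\mathbb{F}\to\mathbb{R}$ with $E_P\le\rho$ everywhere. By construction $E_P$ is then a member of $\mathcal{P}$ and satisfies $E_P[\xi_0]=\ell(\xi_0)=\rho(\xi_0)$, so the supremum is attained at this $\xi_0$ and equals $\rho(\xi_0)$, which is exactly the claimed maximum. I would also note in passing that monotonicity and constant preserving force every dominated $E_P$ to be bounded, since $-\rho(-\xi)\le E_P[\xi]\le\rho(\xi)\le\|\xi\|_\infty$; hence each such $E_P$ genuinely lives in $\mathbb{F}^{\ast}$, consistent with the identification of $\mathbb{F}^{\ast}$ with additive set functions announced before the statement.

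The only real obstacle is the verification of the domination $\ell\le\rho$ on the one-dimensional subspace $V$ --- specifically the negative-$t$ case, whose crux is the subadditivity estimate $\rho(\xi_0)+\rho(-\xi_0)\ge0$; once that is in place the theorem is a direct citation of Hahn--Banach. No continuity or compactness hypotheses on $\rho$ are required here, which is precisely why this basic representation can be placed in the appendix rather than relying on the stronger \emph{continuous from above} assumption used for the existence and uniqueness results in the body of the paper.
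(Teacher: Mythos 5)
Your proof is correct and complete. The paper itself states Theorem \ref{theA.1} without proof, as a standard fact collected in the appendix; the argument it implicitly relies on is exactly the one you give, namely the classical Hahn--Banach extension: the easy inequality $\sup_{P\in\mathcal{P}}E_P[\xi]\le\rho(\xi)$ holds by definition of $\mathcal{P}$, and attainment follows by extending the linear functional $\ell(t\xi_0)=t\rho(\xi_0)$ from the line $\{t\xi_0\}$ subject to domination by $\rho$, where the only nontrivial check is $-\rho(\xi_0)\le\rho(-\xi_0)$, which you correctly derive from subadditivity and $\rho(0)=0$. Your closing remark that the dominated functionals are automatically norm-bounded (via monotonicity and constant preservation, $-\|\xi\|_\infty\le-\rho(-\xi)\le E_P[\xi]\le\rho(\xi)\le\|\xi\|_\infty$) is also the right observation to justify that the extension genuinely lands in $\mathbb{F}^{\ast}$, and you are right that no continuity-from-above hypothesis is needed at this level.
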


\begin{theorem}
\label{theA.2} Let $\mathbb{F}$ be a normed space and $\rho$ be a sublinear
operator from $\mathbb{F}$ to $\mathbb{R}$ dominated by some scalar multiple
of the norm of $\mathbb{F}$. Then
\[
\{x^{*}\in\mathbb{F}^{*}: x^{*}\leq\rho\quad\text{on}\quad\mathbb{F}%
\}\quad\text{is}\quad\sigma(\mathbb{F}^{*}, \mathbb{F})-\text{compact}.
\]

\end{theorem}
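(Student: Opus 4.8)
The plan is to realize the set $K:=\{x^{*}\in\mathbb{F}^{*}:x^{*}\leq\rho\text{ on }\mathbb{F}\}$ as a weak*-closed subset of a closed ball in $\mathbb{F}^{*}$, and then invoke the Banach--Alaoglu theorem together with the fact that a closed subset of a compact set is compact. Let $C>0$ be a constant with $\rho(x)\leq C\|x\|$ for all $x\in\mathbb{F}$; such a $C$ exists by the domination hypothesis.

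The first step is to show that $K$ is norm-bounded, and this is the place where sublinearity really gets used. For $x^{*}\in K$ and any $x\in\mathbb{F}$ we have $x^{*}(x)\leq\rho(x)\leq C\|x\|$, and applying the same bound to $-x$ gives $-x^{*}(x)=x^{*}(-x)\leq\rho(-x)\leq C\|x\|$. Hence $|x^{*}(x)|\leq C\|x\|$ for every $x$, so $\|x^{*}\|\leq C$. Thus $K$ is contained in the closed ball $B_{C}:=\{x^{*}\in\mathbb{F}^{*}:\|x^{*}\|\leq C\}$, which is $\sigma(\mathbb{F}^{*},\mathbb{F})$-compact by Banach--Alaoglu.

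The second step is to verify that $K$ is weak*-closed. I would write
\[
K=\bigcap_{x\in\mathbb{F}}H_{x},\qquad H_{x}:=\{x^{*}\in\mathbb{F}^{*}:x^{*}(x)\leq\rho(x)\}.
\]
For each fixed $x$, the evaluation functional $x^{*}\mapsto x^{*}(x)$ is, by the very definition of the weak* topology, $\sigma(\mathbb{F}^{*},\mathbb{F})$-continuous, so $H_{x}$ is the preimage of the closed half-line $(-\infty,\rho(x)]$ and is therefore weak*-closed. An arbitrary intersection of weak*-closed sets is weak*-closed, so $K$ is weak*-closed. Combining the two steps, $K$ is a weak*-closed subset of the weak*-compact ball $B_{C}$, hence $\sigma(\mathbb{F}^{*},\mathbb{F})$-compact, which is the assertion.

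I do not expect a genuine obstacle here: the argument is a routine combination of boundedness and weak*-closedness. The only point deserving a moment's care is that $\bigcap_{x}H_{x}$ cuts out exactly the dominated functionals; but since every element of $\mathbb{F}^{*}$ is already linear, membership in $\bigcap_{x}H_{x}$ is literally the condition $x^{*}\leq\rho$ on $\mathbb{F}$, so no additional linearity argument is needed. If one preferred to avoid citing Banach--Alaoglu as a black box, the alternative would be to embed $K$ into the product $\prod_{x\in\mathbb{F}}[-C\|x\|,C\|x\|]$ and apply Tychonoff, but the direct route above is cleaner.
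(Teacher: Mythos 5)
Your proof is correct, and it is the canonical argument: norm-boundedness of the dominated set via the domination $\rho(\pm x)\leq C\|x\|$, weak*-closedness as an intersection of preimages of closed half-lines under evaluation maps, then Banach--Alaoglu. The paper itself states Theorem \ref{theA.2} in Appendix A as a background fact with no proof at all (it is the sort of result found in the cited references, e.g.\ Simons' \emph{From Hahn-Banach to Monotonicity}), so there is no alternative route to compare against. One small quibble: your remark that the boundedness step is ``where sublinearity really gets used'' is inaccurate --- that step uses only the domination hypothesis applied at $x$ and $-x$ together with linearity of $x^{*}$; sublinearity of $\rho$ is never actually needed for compactness, only for non-emptiness of the set via Hahn--Banach, which the theorem does not assert.
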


We denote by $\mathbb{F}_{c}^{\ast}$ the set of all countably additive measures.

\begin{theorem}
\label{theA.3} If $\mathcal{P}$ is a subset of $\mathbb{F}_{c}^{\ast}$ which
is $\sigma(\mathbb{F}^{\ast},\mathbb{F})-compact$, then there exists a
nonnegative $P_{0}\in\mathbb{F}_{c}^{\ast}$ such that the measures in
$\mathcal{P}$ are uniformly $P_{0}$-continuous. i.e., if $P_{0}(A)=0$, then
$\sup_{P\in\mathcal{P}}P(A)=0$.
\end{theorem}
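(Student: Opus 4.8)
The plan is to read the assertion as the statement that a $\sigma(\mathbb{F}^{\ast},\mathbb{F})$-compact family $\mathcal{P}\subset\mathbb{F}_{c}^{\ast}$ of nonnegative countably additive measures admits a \emph{control measure}: a single nonnegative $P_{0}\in\mathbb{F}_{c}^{\ast}$ whose null sets are contained in the common null sets of $\mathcal{P}$, i.e. $P_{0}(A)=0\Rightarrow\sup_{P\in\mathcal{P}}P(A)=0$. The cleanest route is through the characterization of relatively weakly compact subsets of the space $\mathbb{F}_{c}^{\ast}$ of countably additive measures equipped with the total-variation norm: by the Bartle--Dunford--Schwartz theorem, such a subset is relatively weakly compact if and only if it is norm bounded and uniformly countably additive, and every such set is uniformly absolutely continuous with respect to one nonnegative control measure $P_{0}$, meaning $\lim_{P_{0}(A)\to0}\sup_{P\in\mathcal{P}}P(A)=0$. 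In particular $P_{0}(A)=0$ forces $\sup_{P\in\mathcal{P}}P(A)=0$, which is exactly the desired conclusion. Hence it suffices to verify that $\mathcal{P}$ is norm bounded and uniformly countably additive.

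Boundedness is immediate: a $\sigma(\mathbb{F}^{\ast},\mathbb{F})$-compact set is $\sigma(\mathbb{F}^{\ast},\mathbb{F})$-bounded, hence norm bounded by the uniform boundedness principle, so $C:=\sup_{P\in\mathcal{P}}P(\Omega)<\infty$. The substantive step is \textbf{uniform countable additivity}: for every decreasing sequence $A_{m}\downarrow\phi$ in $\mathcal{F}$ one must show $\sup_{P\in\mathcal{P}}P(A_{m})\to0$. Here I would argue by contradiction, exploiting the special structure of $\mathbb{F}$, the space of bounded $\mathcal{F}$-measurable functions. If uniform countable additivity failed, there would exist $\varepsilon>0$, sets $A_{m}\downarrow\phi$, and measures $P_{m}\in\mathcal{P}$ with $P_{m}(A_{m})\geq\varepsilon$; extracting a suitable $\sigma(\mathbb{F}^{\ast},\mathbb{F})$-convergent piece of $\{P_{m}\}$ (available since $\mathcal{P}$ is weak$^{\ast}$ compact) and invoking the Vitali--Hahn--Saks--Nikodym theorem together with the fact that $\mathbb{F}$ is a Grothendieck space (weak$^{\ast}$ convergent sequences in $\mathbb{F}^{\ast}$ with countably additive terms converge weakly, and their setwise limit is again countably additive) would contradict the assumed convergence. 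This upgrades $\sigma(\mathbb{F}^{\ast},\mathbb{F})$-compactness to relative weak compactness of $\mathcal{P}$ inside $\mathbb{F}_{c}^{\ast}$.

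Once uniform countable additivity and boundedness are in hand, I would invoke Bartle--Dunford--Schwartz to produce $P_{0}$ directly. Alternatively, for a fully explicit $P_{0}$, I would run a Halmos--Savage reduction: using the control (finite dominating) measure as a yardstick, choose a countable subfamily $\{P_{n}\}\subset\mathcal{P}$ whose supports are maximal, so that $\bigcap_{n}\{A:P_{n}(A)=0\}$ coincides with the common null ideal $\bigcap_{P\in\mathcal{P}}\{A:P(A)=0\}$, and set $P_{0}:=\sum_{n}2^{-n}P_{n}$. Then $P_{0}\in\mathbb{F}_{c}^{\ast}$ is nonnegative, $P_{0}(A)=0$ holds iff $P_{n}(A)=0$ for all $n$, and maximality forces $P(A)=0$ for every $P\in\mathcal{P}$; the transfer of this vanishing to an arbitrary $P\in\mathcal{P}$ uses only that $I_{A}\in\mathbb{F}$, so that $P(I_{A})$ is continuous in the $\sigma(\mathbb{F}^{\ast},\mathbb{F})$ topology.

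I expect the main obstacle to be precisely the upgrade from $\sigma(\mathbb{F}^{\ast},\mathbb{F})$-compactness to uniform countable additivity (equivalently, to relative weak compactness in $\mathbb{F}_{c}^{\ast}$). Compactness against bounded measurable functions is a priori weaker than weak compactness in the variation norm, and the two are reconciled only through the Grothendieck property of $\mathbb{F}$ and the Vitali--Hahn--Saks machinery; the boundedness $C<\infty$ together with the countable additivity of every element of $\mathcal{P}$ is exactly what makes this reconciliation possible, and carrying it out carefully is the heart of the argument. Everything afterwards, including the explicit construction of $P_{0}$ as a countable convex combination, is routine.
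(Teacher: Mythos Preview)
The paper does not prove Theorem~\ref{theA.3}; it is stated in Appendix~A as a background result, and the bibliography points to Zhang~\cite{Zhang} and Dunford--Schwartz~\cite{DS} as the relevant sources. So there is no ``paper's own proof'' to compare against.

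On its merits, your outline is the standard functional-analytic route and is essentially correct: weak$^{\ast}$ compactness gives norm boundedness, and once uniform countable additivity is established the Bartle--Dunford--Schwartz theorem (or an explicit Halmos--Savage countable convex combination) delivers the control measure $P_{0}$. You have also correctly identified where the real work lies, namely upgrading $\sigma(\mathbb{F}^{\ast},\mathbb{F})$-compactness of a family of \emph{countably} additive measures to uniform countable additivity.

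The one place your sketch is genuinely incomplete is the extraction step in that upgrade. You write ``extracting a suitable $\sigma(\mathbb{F}^{\ast},\mathbb{F})$-convergent piece of $\{P_{m}\}$ (available since $\mathcal{P}$ is weak$^{\ast}$ compact)'', but weak$^{\ast}$ compactness does not by itself give sequential compactness: $\mathbb{F}$ is not separable in general, so bounded subsets of $\mathbb{F}^{\ast}$ need not be weak$^{\ast}$ metrizable, and you cannot simply pass to a weak$^{\ast}$ convergent subsequence. The arguments in Zhang~\cite{Zhang} (and the classical treatment in \cite{DS}) handle this by working with nets or, more directly, by a Dini-type argument on the decreasing sequence $I_{A_{m}}$ together with the fact that each $P\mapsto P(A_{m})$ is weak$^{\ast}$ continuous and each individual $P\in\mathcal{P}$ is countably additive; the Grothendieck/Vitali--Hahn--Saks machinery you mention is the right toolkit, but the bridge from compactness to a \emph{sequence} on which those theorems bite needs to be spelled out. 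Everything else in your plan is sound.
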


\section{Some results about coherent risk measure}

In this section, we give some basic definitions and results about coherent
risk measure which are used in our paper. Reader can refer \cite{ADEH},
\cite{ADEH1} and \cite{Delbaen} for more details. Note that in order to ensure
our statements of the entire paper is consistent, the operator we used in our
paper is sublinear which is different from the coherent risk measure defined
in \cite{ADEH}, \cite{ADEH1} or \cite{Delbaen}, in which it is superlinear.
Thus it is represented as $\sup_{P\in\mathcal{P}}E_{P}$ instead of $\inf
_{P\in\mathcal{P}}E_{P}$ and the conditional expectation is taken as
$\mathrm{ess}\sup_{P\in\mathcal{P}}E_{P}[\cdot|\mathcal{C}]$ instead of
$\mathrm{ess}\inf_{P\in\mathcal{P}}E_{P}[\cdot|\mathcal{C}]$. Though the
definition is different, the methods and results are not affected.

For a given probability set $(\Omega, \mathcal{F}, P_{0})$ by the filtration
$\{\mathcal{F}_{n}\}_{n\geq1}$ such that $\mathcal{F}:=\vee_{n=1}%
\mathcal{F}_{n}$.

\begin{definition}
\label{defB.1} A map $\pi: L^{\infty}(P_{0})\mapsto\mathbb{R}$ is called a
coherent risk measure, if it satisfies the following properties:

i) Monotonicity: for all $X$ and $Y$ , if $X\geq Y$ then $\pi(X)\geq\pi(Y)$,

ii) Translation invariance: if $\lambda$ is a constant then for all $X$,
$\pi(\lambda+X)=\lambda+\pi(X)$,

iii) Positive homogeneity: if $\lambda\geq0$, then for all $X$, $\pi(\lambda
X)=\lambda\pi(X)$,

iv) Superadditivity: for all $X$ and $Y$, $\pi(X+Y)\geq\pi(X)+\pi(Y)$.
\end{definition}

\begin{definition}
\label{defB.2} The Fatou property for a risk-adjusted value $\pi$ is defined
as: for any sequence functions $(X_{n})_{n\geq1}$ such that $||X_{n}%
||_{L^{\infty}}\leq1$ and converging to $X$ in probability, then $\pi
(X)\geq\lim\sup\pi(X_{n})$.
\end{definition}

\begin{lemma}
\label{lemB.1} For any coherent risk-adjusted value $\pi$ having the Fatou
property, there exists a convex $L^{1}(P_{0})$-closed set $\mathcal{P}$ of
$P_{0}$-absolutely continuous probabilities on $(\Omega, \mathcal{F})$ called
test probabilities, such that $\pi(X)=\inf_{P\in\mathcal{P}}E_{P}[X]$.
\end{lemma}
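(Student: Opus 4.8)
The plan is to pass from the finitely additive representation furnished by Hahn--Banach to a countably additive one, with the Fatou property supplying exactly the missing ingredient. First I would record the structural consequences of Definition \ref{defB.1}: positive homogeneity gives $\pi(0)=0$, and combined with translation invariance this yields $\pi(c)=c$ for constants $c$; superadditivity together with positive homogeneity makes $\pi$ superlinear (concave and positively homogeneous), and monotonicity gives $\pi(X)\geq 0$ whenever $X\geq 0$. I would then introduce the acceptance cone
\[
\mathcal{A}:=\{X\in L^{\infty}(P_{0}):\pi(X)\geq 0\},
\]
a convex cone containing $L^{\infty}_{+}(P_{0})$, and observe that translation invariance recovers $\pi$ from $\mathcal{A}$ through $\pi(X)=\sup\{m\in\mathbb{R}:X-m\in\mathcal{A}\}$.

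Next I would define the candidate representing set directly as the collection of $P_{0}$-absolutely continuous probabilities dominating $\pi$,
\[
\mathcal{P}:=\{P\ll P_{0}:E_{P}[X]\geq\pi(X)\ \text{for all}\ X\in L^{\infty}(P_{0})\},
\]
identified with the densities $\mathcal{Q}:=\{Z\in L^{1}(P_{0}):Z\geq 0,\ E_{P_{0}}[Z]=1,\ E_{P_{0}}[ZX]\geq\pi(X)\ \forall X\}$. The inequality $\pi(X)\leq\inf_{P\in\mathcal{P}}E_{P}[X]$ is immediate from the definition. Convexity of $\mathcal{P}$ is clear because its defining constraint is affine in $P$, and $L^{1}(P_{0})$-closedness follows since $Z_{n}\to Z$ in $L^{1}$ implies $E_{P_{0}}[Z_{n}X]\to E_{P_{0}}[ZX]$ for each bounded $X$, preserving $E_{P_{0}}[ZX]\geq\pi(X)$ along with $Z\geq 0$ and $E_{P_{0}}[Z]=1$.

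The substance is the reverse inequality $\pi(X)\geq\inf_{P\in\mathcal{P}}E_{P}[X]$. I would argue that the Fatou property forces $\mathcal{A}$ to be closed in the weak$^{\ast}$ topology $\sigma(L^{\infty}(P_{0}),L^{1}(P_{0}))$: by the Krein--\v{S}mulian theorem it suffices to check that $\mathcal{A}\cap\{\|X\|_{\infty}\leq r\}$ is weak$^{\ast}$-closed for each $r$, and on such a ball (where the weak$^{\ast}$ topology is metrizable) one takes a weak$^{\ast}$-convergent sequence in $\mathcal{A}$, passes by Mazur's lemma to convex combinations (still in the cone $\mathcal{A}$ and still bounded) that converge in $L^{1}(P_{0})$, extracts an a.s.-convergent subsequence, and applies the Fatou inequality of Definition \ref{defB.2} to place the limit back in $\mathcal{A}$. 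Once $\mathcal{A}$ is weak$^{\ast}$-closed, the bipolar theorem in the dual pairing $(L^{\infty}(P_{0}),L^{1}(P_{0}))$ identifies $\mathcal{A}$ with the bipolar of its dual cone, whose normalized nonnegative elements are exactly the densities in $\mathcal{Q}$; feeding this through $\pi(X)=\sup\{m:X-m\in\mathcal{A}\}$ yields $\pi(X)=\inf_{Z\in\mathcal{Q}}E_{P_{0}}[ZX]=\inf_{P\in\mathcal{P}}E_{P}[X]$.

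The main obstacle is precisely this last passage. Hahn--Banach applied to the sublinear functional $-\pi$ on $L^{\infty}$ only delivers a dominating element of the dual $(L^{\infty})^{\ast}$, i.e.\ a finitely additive set function, and a priori there is no reason for it to be countably additive or $P_{0}$-absolutely continuous. The Fatou property is exactly the hypothesis that closes this gap, via weak$^{\ast}$-closedness of $\mathcal{A}$ and the bipolar theorem, pinning the representing functionals down inside $L^{1}(P_{0})$. I would therefore concentrate the effort on the Krein--\v{S}mulian closedness step and the careful Mazur-based reduction from weak$^{\ast}$-convergent sequences to in-probability-convergent ones on norm-bounded sets, the remaining verifications being routine.
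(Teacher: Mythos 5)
The paper never proves this lemma: it sits in Appendix~B as a known result, quoted with a pointer to the cited literature (Delbaen, \emph{Coherent Risk Measures on General Probability Spaces}), so there is no in-paper proof to compare against. Your proposal reconstructs precisely the standard argument from that source --- acceptance cone $\mathcal{A}$, Krein--\v{S}mulian reduction to norm-bounded sets, the Fatou property to obtain weak$^{\ast}$-closedness, and the bipolar theorem in the pairing $(L^{\infty}(P_{0}),L^{1}(P_{0}))$ to extract the representing densities --- and the outline, including the recovery $\pi(X)=\sup\{m: X-m\in\mathcal{A}\}$ and the identification of the normalized dual cone with $\mathcal{P}$, is correct.

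One technical caveat deserves attention: your claim that the weak$^{\ast}$ topology is metrizable on norm-bounded balls of $L^{\infty}(P_{0})$ holds only when $L^{1}(P_{0})$ is separable, which neither the lemma nor the paper assumes; without separability you cannot test weak$^{\ast}$-closedness of $\mathcal{A}\cap\{\|X\|_{\infty}\leq r\}$ with sequences. The standard repair (and the one in Delbaen's own proof) avoids sequences and Mazur's lemma altogether: by the Fatou property (extended from the unit ball to radius $r$ via positive homogeneity), the truncated cone is closed under convergence in probability, hence norm-closed in $L^{1}(P_{0})$; being convex, it is then $\sigma(L^{1},L^{\infty})$-closed; and since $L^{\infty}(P_{0})\subset L^{1}(P_{0})$ for a probability measure, any net in the truncated cone converging weak$^{\ast}$ in $L^{\infty}$ also converges in $\sigma(L^{1},L^{\infty})$, so its limit remains in the set. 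Substituting this observation for your metrizability-plus-Mazur step makes the argument complete in full generality; the bipolar step that follows is unaffected.
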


\begin{definition}
[Stability]\label{append-stability} We say that the set $\mathcal{P}$ of test
probabilities is stable if for elements $Q^{0},Q\in\mathcal{P}^{e}$ with
associated martingales $Z_{n}^{0},Z_{n}$, where $\mathcal{P}^{e}$ denotes the
elements in $\mathcal{P}$ which is equivalent to $P_{0}$ and $Z_{n}%
^{Q}:=E_{P_{0}}[\frac{dQ}{dP_{0}}|\mathcal{F}_{n}]$. For each stopping time
$\tau$ the martingale $L$ defined as $L_{n}=Z_{n}^{0}$ for $n\leq\tau$ and
$L_{n}=Z_{\tau}^{0}\frac{Z_{n}}{Z_{\tau}}$ for $n\geq\tau$ defines an element
of $\mathcal{P}$.
\end{definition}

\begin{proposition}
\label{proB.4} A bounded $\mathcal{F}_{N}$-random variable $f$ is called a
``final value''. We consider $\Phi_{\tau}(f)=\mathrm{ess}\inf_{Q\in
\mathcal{P}^{e}}E_{Q}[f|\mathcal{F}_{\tau}]$, then the following is equivalent:

i) Stability of the set $\mathcal{P}$,

ii) Recursivity: for each final value $f$, the family $\{\Phi_{\nu}(f)|\nu\,\,
\mathrm{is\, a\, stopping\, time }\}$ satisfies: for every two stopping times
$\sigma\leq\tau$, we have $\Phi_{\sigma}(f)=\Phi_{\sigma}(\Phi_{\tau}(f))$.
\end{proposition}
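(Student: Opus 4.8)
The plan is to prove the two implications separately, exploiting throughout the concatenation (pasting) operation that defines stability. Write $\Phi_\tau(f)=\mathrm{ess}\inf_{Q\in\mathcal{P}^e}E_Q[f|\mathcal{F}_\tau]$ and fix stopping times $\sigma\le\tau$. The workhorse is a bookkeeping identity for a pasted measure: if $\tilde Q$ has density process $L_n=Z_n^0$ for $n\le\tau$ and $L_n=Z_\tau^0 Z_n/Z_\tau$ for $n\ge\tau$ (with $Z^0=Z^{Q^0}$ and $Z=Z^{Q}$), then conditioning first on $\mathcal{F}_\tau$ and then changing measure from $P_0$ to $Q^0$ between $\sigma$ and $\tau$ gives $E_{\tilde Q}[f|\mathcal{F}_\sigma]=E_{Q^0}[E_Q[f|\mathcal{F}_\tau]|\mathcal{F}_\sigma]$; in particular $E_{\tilde Q}[f|\mathcal{F}_\tau]=E_Q[f|\mathcal{F}_\tau]$. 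I would establish this identity first, since both directions rest on it.

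For (i)$\Rightarrow$(ii), one inequality is free: by the tower property and $E_Q[f|\mathcal{F}_\tau]\ge\Phi_\tau(f)$, every $Q\in\mathcal{P}^e$ satisfies $E_Q[f|\mathcal{F}_\sigma]=E_Q[E_Q[f|\mathcal{F}_\tau]|\mathcal{F}_\sigma]\ge E_Q[\Phi_\tau(f)|\mathcal{F}_\sigma]\ge\Phi_\sigma(\Phi_\tau(f))$, and taking the essential infimum over $Q$ yields $\Phi_\sigma(f)\ge\Phi_\sigma(\Phi_\tau(f))$ with no appeal to stability. For the reverse inequality I would first use stability to make the family $\{E_Q[f|\mathcal{F}_\tau]:Q\in\mathcal{P}^e\}$ downward directed (given $Q_1,Q_2$, paste them at $\tau$ over the $\mathcal{F}_\tau$-set where one conditional value is smaller), so that there is a sequence $Q_n$ with $E_{Q_n}[f|\mathcal{F}_\tau]\downarrow\Phi_\tau(f)$. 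Then for an arbitrary $Q'\in\mathcal{P}^e$ I paste $Q'$ before $\tau$ with $Q_n$ after $\tau$; by stability the result $\tilde Q_n$ lies in $\mathcal{P}$, and the identity above gives $\Phi_\sigma(f)\le E_{\tilde Q_n}[f|\mathcal{F}_\sigma]=E_{Q'}[E_{Q_n}[f|\mathcal{F}_\tau]|\mathcal{F}_\sigma]$. Letting $n\to\infty$ and invoking monotone convergence under $Q'$ produces $\Phi_\sigma(f)\le E_{Q'}[\Phi_\tau(f)|\mathcal{F}_\sigma]$, and an essential infimum over $Q'$ gives $\Phi_\sigma(f)\le\Phi_\sigma(\Phi_\tau(f))$, closing the loop.

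For (ii)$\Rightarrow$(i) I would argue by the separation (bipolar) characterization of $\mathcal{P}$: since $\mathcal{P}$ is convex, $L^1(P_0)$-closed and contains equivalent measures, a $P_0$-absolutely continuous probability $\tilde Q$ belongs to $\mathcal{P}$ if and only if $E_{\tilde Q}[f]\ge\inf_{Q''\in\mathcal{P}}E_{Q''}[f]$ for every $f\in L^\infty(P_0)$. Fixing $Q^0,Q\in\mathcal{P}^e$ and a stopping time $\tau$, let $\tilde Q$ be their pasting with density process $L$ as in Definition \ref{append-stability}. For arbitrary $f$ the identity gives $E_{\tilde Q}[f|\mathcal{F}_\sigma]=E_{Q^0}[E_Q[f|\mathcal{F}_\tau]|\mathcal{F}_\sigma]\ge E_{Q^0}[\Phi_\tau(f)|\mathcal{F}_\sigma]\ge\Phi_\sigma(\Phi_\tau(f))$, and recursivity rewrites the last term as $\Phi_\sigma(f)$. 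Taking $\sigma$ to be the minimal (trivial) time and integrating, $E_{\tilde Q}[f]\ge\Phi_0(f)=\inf_{Q''\in\mathcal{P}^e}E_{Q''}[f]=\inf_{Q''\in\mathcal{P}}E_{Q''}[f]$ for all $f$; by the separation characterization $\tilde Q\in\mathcal{P}$, which is precisely stability.

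I expect two principal difficulties. The first is the measure-theoretic bookkeeping for the pasting identity when $\tau$ is a genuine stopping time rather than a deterministic one: one must justify the successive conditionings on $\mathcal{F}_\tau$ and the change of measure between $\sigma$ and $\tau$ through the stopped density ratios, and verify that the pasted density process indeed defines a probability in $\mathcal{P}^e$ (equivalence and integrability). The second, more delicate point is the downward-directedness invoked in (i)$\Rightarrow$(ii): showing $\{E_Q[f|\mathcal{F}_\tau]\}$ is a lattice requires a local, set-dependent pasting, which must be deduced from the single-switch stability of Definition \ref{append-stability} by a localization argument, with care that the approximating sequence decreases to the essential infimum $P_0$-a.s. rather than merely in a weaker sense.
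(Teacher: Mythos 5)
The paper itself gives no proof of Proposition \ref{proB.4}: it is stated in Appendix B as background quoted from \cite{ADEH1} and \cite{Delbaen}, so there is no internal argument to compare against. Judged on its own, your proposal is correct in structure and is in fact the standard proof from those references: the Bayes-type pasting identity $E_{\tilde Q}[f|\mathcal{F}_\sigma]=E_{Q^0}[E_Q[f|\mathcal{F}_\tau]|\mathcal{F}_\sigma]$ is the right workhorse; the inequality $\Phi_\sigma(f)\geq\Phi_\sigma(\Phi_\tau(f))$ indeed costs nothing; the reverse inequality via downward directedness plus (conditional) dominated convergence is the standard route; and the converse via Hahn--Banach separation of the pasted measure from the convex, $L^1(P_0)$-closed set $\mathcal{P}$ of Lemma \ref{lemB.1} is exactly how the cited literature argues.

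Two of the difficulties you flag have clean resolutions and should be written out rather than left as caveats. First, the set-dependent pasting needed for directedness is literally a case of the single-switch stability in Definition \ref{append-stability}: with $A=\{E_{Q_1}[f|\mathcal{F}_\tau]\leq E_{Q_2}[f|\mathcal{F}_\tau]\}\in\mathcal{F}_\tau$, the random time $\tau_A:=\tau\mathbf{I}_A+\infty\mathbf{I}_{A^c}$ is again a stopping time, and pasting $Q_2$ (before) with $Q_1$ (after) at $\tau_A$ yields a measure whose conditional expectation at $\tau$ is the pointwise minimum of the two; equivalence of any pasted measure to $P_0$ is automatic since its density is a product of ratios of strictly positive densities, so stability puts it in $\mathcal{P}^e$ and not merely in $\mathcal{P}$. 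Second, your closing step in (ii)$\Rightarrow$(i) --- ``take $\sigma$ minimal and integrate'' --- genuinely requires the initial $\sigma$-field to be trivial, so that $\Phi_\sigma(f)$ is the constant $\inf_{Q''\in\mathcal{P}^e}E_{Q''}[f]$; the paper's filtration $\{\mathcal{F}_n\}_{n\geq1}$ never says this, so you must add the (standard) convention $\mathcal{F}_0=\{\emptyset,\Omega\}$, since without a trivial initial time the scalar inequality $E_{\tilde Q}[f]\geq\inf_{Q''\in\mathcal{P}}E_{Q''}[f]$ does not follow from the conditional one. Finally, the separation step silently uses $\inf_{Q''\in\mathcal{P}}E_{Q''}[f]=\inf_{Q''\in\mathcal{P}^e}E_{Q''}[f]$; this is true but deserves a line (mix any $Q''\in\mathcal{P}$ with a fixed element of $\mathcal{P}^e$ and let the weight tend to one). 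With these points filled in, your argument is complete.
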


\end{document}